\documentclass{amsart}
\usepackage{graphicx}
\usepackage{amsmath}
\usepackage{amsfonts}
\usepackage{amssymb}
\usepackage{enumitem}
\usepackage{ifpdf}
\providecommand{\U}[1]{\protect\rule{.1in}{.1in}}
\newtheorem{theorem}{Theorem}

\newtheorem{corollary}[theorem]{Corollary}

\newtheorem{definition}[theorem]{Definition}

\newtheorem{lemma}[theorem]{Lemma}

\newtheorem{proposition}[theorem]{Proposition}
\newtheorem{remark}[theorem]{Remark}

\newcommand{\Int}{\operatorname{Int}}

\makeatletter \@namedef{subjclassname@2020}{\textup{2020}
Mathematics Subject Classification} \makeatother
\begin{document}
\title{Oscillation Classes: An Interpolation Approach}
\author{Joaquim Mart\'{\i}n}
\address{Department of Mathematics\\
Universitat Aut\`onoma de Barcelona} \email{Joaquin.Martin@uab.cat}
\thanks{Partially supported by Grants PID2024-160507NB-I00 and PID2024-155917NB-I00
funded by MCIN/AEI/10.13039/501100011033.} \subjclass[2020]{46E30,
46E35, 46B70} \keywords{Oscillation classes, rearrangement-invariant
spaces, interpolation extremals, normability, supremal operators}

\begin{abstract}
Let \(\varphi\) be an admissible concave function on \((0,1)\) and
let \(X\) be a rearrangement-invariant space. We study the classes
determined by the oscillation functional
\[
\mathcal N_{\varphi,X}(f) = \left\| \frac{f^{**}-f^*}{\varphi}
\right\|_X+\|f\|_1.
\]
We develop an interpolation method, based on the
Aronszajn--Gagliardo extremal construction, which allows the
normability problem and the determination of the optimal
rearrangement-invariant Banach exterior to be treated in a unified
way.

A recovery principle shows that the oscillation construction
reflects the inclusion order of the underlying
rearrangement-invariant spaces. This makes it possible to transfer
the corresponding Aronszajn--Gagliardo extremal structure to the
oscillation classes. In particular, the upper extremal generates the
least rearrangement-invariant Banach space containing the class,
while normability occurs precisely when the lower and upper
extremals collapse.

At the critical fundamental scale, normability is rigid and forces
the underlying space to be the corresponding Lorentz endpoint.
Applications to Lorentz, limiting Lorentz, and Orlicz scales
illustrate the scope of the method, including limiting normable
examples for which the classical Copson absorption mechanism fails.
\end{abstract}

\maketitle

%%%%%%%%%%%%%%%%%%%%%%%%%%%%%%%%%%%%%%%%%%%%%%%%%%%%%%%%%%%%%%%%%%%%%%%%%%%%%%%
\section{Introduction}
%%%%%%%%%%%%%%%%%%%%%%%%%%%%%%%%%%%%%%%%%%%%%%%%%%%%%%%%%%%%%%%%%%%%%%%%%%%%%%%

Let \(f\) be a measurable function on \((0,1)\), endowed with
Lebesgue measure.\footnote{The notation, terminology, and background
needed for the notions used in this introduction and throughout the
paper are recalled in Section~\ref{sec:background}.} If \(f^*\) and
\(f^{**}\) denote, respectively, the decreasing rearrangement of
\(f\) and its maximal average, then the quantity
\begin{equation}
\label{eq:intro-oscillation} f^{**}(t)-f^*(t), \qquad 0<t<1,
\end{equation}
which provides a natural measure of the oscillation of the
decreasing rearrangement of \(f\), has played a recurring role in
rearrangement theory, symmetrization, Sobolev embeddings, and
endpoint problems since the work of Bennett, DeVore and Sharpley
\cite{BennettDeVoreSharpley}. Its subsequent development and
applications can be found, among many others, in
\cite{Kolyada1989,BasteroMilmanRuiz2003,
CarroGogatishviliMartinPick2005,MartinMilmanPustylnik2007,
MartinMilman2010,Mastylo2011,MartinMilman2014,MartinOrtiz2023}, and
the references therein.

A basic difficulty in working directly with
\eqref{eq:intro-oscillation} is that this quantity is nonlinear.
Traditionally, this difficulty has been approached either by
identifying equivalent expressions that recover norms of classical
rearrangement-invariant spaces, or by using oscillation estimates
without requiring an underlying function-space structure. The
present paper is concerned precisely with the interaction between
these two viewpoints.

Let \(\varphi\) be an admissible concave function on \((0,1)\), and
let \(X\) be a rearrangement-invariant Banach function space (r.i.
space, for short). We consider the normalized oscillation
\[
\frac{f^{**}(t)-f^*(t)}{\varphi(t)}
\]
and the associated class
\begin{equation}
\label{eq:intro-Ophi} \mathcal O_\varphi(X) = \left\{ f\in L^1:
\left\| \frac{f^{**}-f^*}{\varphi} \right\|_X+\|f\|_1<\infty
\right\}.
\end{equation}
Equivalently, this class is determined by the functional
\[
\mathcal N_{\varphi,X}(f) = \left\| \frac{f^{**}-f^*}{\varphi}
\right\|_X+\|f\|_1.
\]

The purpose of this paper is to answer two natural questions.

\medskip

\noindent \emph{What is the least rearrangement-invariant Banach
function space containing \(\mathcal O_\varphi(X)\)?}

\smallskip

\noindent \emph{When is \(\mathcal N_{\varphi,X}\) equivalent to a
rearrangement-invariant Banach function norm?}

\medskip

The main objective of the paper can be stated in one sentence:
normability of the oscillation class is equivalent to the collapse
of the lower and upper Aronszajn--Gagliardo interpolation extremals
associated with \(X\). Moreover, the upper extremal generates the
least r.i. Banach space containing \(\mathcal O_\varphi(X)\).

The two questions formulated above have important precedents in the
literature. Weighted \(L^p\)-models of oscillation classes and their
functional properties were studied in
\cite{CarroGogatishviliMartinPick2005}. A related interpolation
approach to the power-profile oscillation class was developed in
\cite{GogatishviliPickSchneider2012}, where \(K\)-functionals
involving such classes were characterized and applied to the
determination of rearrangement-invariant hulls of Besov spaces. In
the power case \(\varphi(t)=t^{m/n}\), optimal-domain,
optimal-range, and normability problems were developed in
\cite{KermanPick2006,KermanPick2009,Pustylnik2008}. Related
optimality and supremal-operator methods for Sobolev embeddings
associated with general isoperimetric profiles appear in
\cite{Kubicek2026}. Recent developments also include explicit
\(K\)-functional descriptions for classical Lorentz spaces measuring
oscillation and nonstandard Calder\'on-type results for endpoint
Lorentz couples; see
\cite{GogatishviliNevesPickTurcinova2025,KubicekCalderon2025}. Thus
both the normability and optimality problems have substantial
precedents. In particular, the power-profile normability problem has
already been characterized in earlier work. The point here is to
place these questions within a single interpolation framework for
general admissible profiles and to transfer the resulting
order-reflecting structure to the Aronszajn--Gagliardo extremals.

The simultaneous appearance of these two problems is natural in the
context of sharp rearrangement inequalities. Many endpoint
inequalities have the form\footnote{Throughout the paper,
\(A\lesssim B\) means that \(A\leq CB\) for some constant \(C>0\)
independent of the relevant variables, and \(A\simeq B\) means that
both \(A\lesssim B\) and \(B\lesssim A\) hold.},
\begin{equation}
\label{eq:intro-general-profile} f^{**}(t)-f^*(t) \lesssim
\varphi(t)\,\mathcal A_f(t),
\end{equation}
where \(\mathcal A_f\) is an analytic quantity associated with \(f\)
and the profile \(\varphi\) reflects the underlying geometry. For
instance, in the classical Euclidean Sobolev setting one has
\(\varphi(t)=t^{1/n}\). More precisely
\[
f^{**}(t)-f^*(t) \lesssim t^{1/n}|\nabla f|^{**}(t);
\]
see, for example, \cite{MartinMilmanPustylnik2007}. The profile may
encode, for instance, dimensional, isoperimetric, volume-growth, or
capacitary information. It is precisely the profile occurring in
inequalities such as \eqref{eq:intro-general-profile} that motivates
the normalization in \eqref{eq:intro-Ophi}. Classical roots of this
point of view lie in rearrangement and symmetrization methods for
Sobolev inequalities \cite{Mazya1985,Talenti1995}, while its
systematic use in endpoint embeddings, self-improvement, and
symmetrization was developed in a series of works of M.~Milman and
collaborators; see, among others,
\cite{BasteroMilmanRuiz2003,MilmanPustylnik2004,
MartinMilmanPustylnik2007,MartinMilman2010,MartinMilman2014} and the
references therein.

The main contribution of the present paper is a unified
interpolation method for these problems. Besides extending the
power-profile theory to general admissible functions \(\varphi\),
the method places normability, optimal exteriors, and the associated
operator criteria within a single structural framework. The relevant
object is the Lorentz couple
\[
(L^1,\Lambda_\varphi),
\]
and the decisive point is that the nonlinear oscillation
construction retains, in a precise sense, the interpolation geometry
of the underlying r.i. space. This makes it possible to treat the
optimal exterior and the normability problem simultaneously: the
former is governed by the upper Aronszajn--Gagliardo extremal,
whereas the latter occurs precisely when the lower and upper
extremals collapse. Thus the method not only extends earlier results
but also explains them from a common interpolation point of view.

There is also a classical reason why interpolation is particularly
well suited to this setting. Real interpolation methods based
directly on the derivative \(k(t,f)=\frac{d}{dt}K(t,f)\) of the
\(K\)-functional already appear in Bennett's work
\cite{Bennett1974}. Jawerth and Milman later used Gagliardo diagrams
to study the derivatives of the \(K\)- and \(E\)-functionals and the
associated weak-type spaces; see \cite{JawerthMilman1989}. This
point of view was subsequently developed in terms of Gagliardo
coordinate spaces in \cite{Milman2016BMO}.

For the basic couple \((L^1,L^\infty)\),
\[
K(t,f;L^1,L^\infty)=tf^{**}(t),
\]
and therefore
\[
K(t,f)-tK'(t,f) = t\bigl(f^{**}(t)-f^*(t)\bigr).
\]
Thus, up to the factor \(t\), the oscillation is the first Gagliardo
coordinate of the basic couple; see
\cite[Section~3.1]{BerghLofstrom} and
\cite{JawerthMilman1989,Milman2016BMO}. More importantly, the same
phenomenon occurs for the Lorentz couple \((L^1,\Lambda_\varphi)\):
its exact \(K\)-geometry again produces \(f^{**}-f^*\) as the
complementary Gagliardo coordinate; see Section~2.

 We shall therefore work with r.i.
spaces \(X\) satisfying
\begin{equation*}
\Lambda_\varphi\hookrightarrow X\hookrightarrow L^1.
\end{equation*}
With such an \(X\) we associate the Aronszajn--Gagliardo extremal
spaces
\begin{equation*}
 X_-\hookrightarrow X\hookrightarrow X_+,
\end{equation*}
where \(X_-\) is the greatest interpolation space for
\((L^1,\Lambda_\varphi)\) contained in \(X\), while \(X_+\) is the
least interpolation space for this couple containing \(X\); see
\cite{AronszajnGagliardo1965,BennettSharpley,BerghLofstrom,
KreinPetuninSemenov}.

The statement given above can now be made precise. Our main theorem
gives
\begin{equation}
\label{eq:intro-main-equivalence}
\begin{aligned}
\mathcal N_{\varphi,X}\text{ is r.i.-normable}
&\quad\Longleftrightarrow\quad X_-\simeq X_+
\\
&\quad\Longleftrightarrow\quad X\in\Int(L^1,\Lambda_\varphi),
\end{aligned}
\end{equation}
where \(\Int(L^1,\Lambda_\varphi)\) denotes the class of
interpolation spaces for the couple \((L^1,\Lambda_\varphi)\).

A key ingredient of this method is a recovery principle showing that
the oscillation construction reflects the inclusion order of the
underlying rearrangement-invariant spaces. More precisely, for r.i.
spaces \(A\) and \(X\) satisfying
\[
\Lambda_\varphi\hookrightarrow A,X,
\]
we prove
\begin{equation*}
\mathcal O_\varphi(A)\hookrightarrow\mathcal O_\varphi(X)
\quad\Longleftrightarrow\quad A\hookrightarrow X.
\end{equation*}
This order-reflection principle allows the Aronszajn--Gagliardo
extremal construction for \((L^1,\Lambda_\varphi)\) to be
transferred directly to the oscillation classes.

In particular, the least r.i. Banach space containing \(\mathcal
O_\varphi(X)\) is, up to equivalence,
\[
\mathcal O_\varphi(X_+),
\]
while the lower extremal generates the canonical normable interior
\[
\mathcal O_\varphi(X_-) \hookrightarrow \mathcal O_\varphi(X).
\]
Thus the two Aronszajn--Gagliardo extremals provide canonical
normable envelopes from below and above, and
\eqref{eq:intro-main-equivalence} shows that normability occurs
exactly when they coincide.

We also obtain an equivalent dual formulation in terms of an
explicit supremal operator associated with the dual
Lorentz--Marcinkiewicz couple. This turns the abstract collapse
condition \(X_-\simeq X_+\) into a concrete operator criterion.

The criterion becomes particularly rigid at the critical scale. If
the fundamental function of \(X\) satisfies
\[
\phi_X(t)\simeq\varphi(t),
\]
then the collapse in \eqref{eq:intro-main-equivalence} occurs if and
only if
\[
X\simeq\Lambda_\varphi.
\]
Thus, among r.i. spaces having the critical fundamental function
\(\varphi\), the Lorentz endpoint \(\Lambda_\varphi\) is, up to
equivalence of norms, the unique space for which the corresponding
oscillation functional is normable.

The abstract results lead to explicit conclusions on classical
scales. For \(X=L^{p,q}\), \(1<p<\infty\), \(1\leq q\leq\infty\), we
identify the upper extremal and the optimal Banach exterior and show
that normability holds precisely when \(q=1\). The limiting Lorentz
and Orlicz scales display subtler borderline phenomena: normability
may persist in the former, whereas logarithmic perturbations in the
latter remain non-normable at the critical threshold. These examples
show that the criterion detects fine rearrangement-invariant
structure beyond the underlying power exponent.

The paper is organized as follows. Section~\ref{sec:background}
collects the necessary background and describes the relevant
\(K\)-geometry of the couple \((L^1,\Lambda_\varphi)\).
Section~\ref{sec:AG} develops the Aronszajn--Gagliardo extremal
construction and its dual supremal description.
Section~\ref{sec:recovery-exterior} establishes the recovery and
order-reflection principles and identifies the optimal Banach
exterior. Section~\ref{sec:collapse} proves the normability and
interpolation-collapse theorem, together with the critical rigidity
result. The final section applies the abstract theory to Lorentz,
limiting Lorentz, and Orlicz spaces.

%%%%%%%%%%%%%%%%%%%%%%%%%%%%%%%%%%%%%%%%%%%%%%%%%%%%%%%%%%%%%%%%%%%%%%%%%%%%%%%
\section{Background and setup}
\label{sec:background}
%%%%%%%%%%%%%%%%%%%%%%%%%%%%%%%%%%%%%%%%%%%%%%%%%%%%%%%%%%%%%%%%%%%%%%%%%%%%%%%

We briefly recall the notation and standard facts concerning
rearrangement-invariant (r.i.) spaces that will be used throughout
the paper. Unless otherwise stated, all function spaces are
considered over the interval \((0,1)\) endowed with Lebesgue
measure. For further background we refer to
\cite{BennettSharpley,KreinPetuninSemenov,BerghLofstrom}.

Let \((0,1)\) be endowed with Lebesgue measure. We denote by
\(L^0(0,1)\) the space of all measurable functions on \((0,1)\)
which are finite almost everywhere, with the usual identification
modulo equality almost everywhere.

For \(f\in L^0(0,1)\), its decreasing rearrangement is defined by
\[
f^*(s) = \inf\bigl\{ \lambda>0: |\{x\in(0,1):|f(x)|>\lambda\}|\leq s
\bigr\}, \qquad 0<s<1.
\]
Associated with \(f^*\), we consider its maximal average
\[
f^{**}(t)=\frac1t\int_0^t f^*(s)\,ds, \qquad 0<t<1.
\]
Recall also that
\begin{equation}
\label{eq:fss-subadditivity} (f+g)^{**}(t)\leq f^{**}(t)+g^{**}(t),
\qquad 0<t<1.
\end{equation}
A basic property of the oscillation is that
\begin{equation}
\label{eq:t-oscillation-increasing} t\bigl(f^{**}(t)-f^*(t)\bigr)
\quad\text{is nondecreasing on }(0,1);
\end{equation}
see, for example, \cite[proof of
Lemma~2.1]{CarroGogatishviliMartinPick2005}. Moreover, if \(f\in
L^1(0,1)\), then
\begin{equation*}
\frac{d}{dt}f^{**}(t) = -\frac{f^{**}(t)-f^*(t)}{t}
\end{equation*}
for almost every \(t\in(0,1)\).

A Banach function space \(X\) on \((0,1)\) is called
rearrangement-invariant (r.i.) if
\[
f^*=g^* \quad\Longrightarrow\quad \|f\|_X=\|g\|_X.
\]
Throughout the paper, Banach function spaces are understood in the
sense of \cite{BennettSharpley}; in particular, the Fatou property
is part of the definition.

The associate space \(X'\) is defined by
\[
\|g\|_{X'} = \sup_{\|f\|_X\leq1} \int_0^1 |f(s)g(s)|\,ds.
\]
It is again an r.i. space and
\begin{equation*}
\|g\|_{X'} = \sup_{\|f\|_X\leq1} \int_0^1 f^*(t)g^*(t)\,dt.
\end{equation*}
Consequently,
\begin{equation*}
\int_0^1 |f(t)g(t)|\,dt \leq \|f\|_X\|g\|_{X'}.
\end{equation*}
Moreover,
\[
X=X''
\]
with equality of norms.

On \((0,1)\), every r.i. space satisfies
\begin{equation*}
L^\infty\hookrightarrow X\hookrightarrow L^1.
\end{equation*}
The fundamental function of \(X\) is defined by
\[
\phi_X(t) = \|\chi_{(0,t)}\|_X, \qquad 0<t<1.
\]
It is increasing and quasi-concave, and the fundamental functions of
\(X\) and \(X'\) satisfy
\begin{equation*}
 \phi_X(t)\phi_{X'}(t)=t,
\qquad 0<t<1.
\end{equation*}
For example, \( \phi_{L^p}(t)=t^{1/p}, \; 1\leq p<\infty\), whereas
\( \phi_{L^\infty}(t)=1.\)

We now introduce the class of profiles which determines the
interpolation couples considered throughout the paper.

\begin{definition}
\label{def:admissible-profile} An increasing concave function
\(\varphi:[0,1]\to[0,1]\) is called \emph{admissible} if
\(\varphi(0)=0\), \(\varphi(1)=1\), and
\begin{equation}
\label{eq:phi-Hardy} A_\varphi := \sup_{0<t<1} \frac1{\varphi(t)}
\int_0^t\frac{\varphi(s)}{s}\,ds <\infty, \qquad B_\varphi :=
\sup_{0<t<1} \varphi(t) \int_t^1\frac{ds}{s\varphi(s)} <\infty.
\end{equation}
\end{definition}

For an admissible \(\varphi\), set
\begin{equation*}
\psi(t)=\frac{t}{\varphi(t)}, \qquad 0<t\leq1.
\end{equation*}
Then \(\psi\) is quasi-concave: it is nondecreasing and
\(\psi(t)/t=1/\varphi(t)\) is nonincreasing.

The Lorentz space associated with \(\varphi\) is defined by
\begin{equation*}
\|f\|_{\Lambda_\varphi} = \int_0^1 f^*(t)\,d\varphi(t).
\end{equation*}
It is an r.i. Banach function space with fundamental function
\[
\phi_{\Lambda_\varphi}(t)=\varphi(t).
\]

The corresponding Marcinkiewicz space is defined by
\begin{equation*}
\|h\|_{M_\psi} = \sup_{0<t<1}\psi(t)h^{**}(t).
\end{equation*}
The classical Lorentz--Marcinkiewicz duality gives
\begin{equation*}
(\Lambda_\varphi)'=M_\psi, \qquad \psi(t)=\frac{t}{\varphi(t)}.
\end{equation*}

By \cite[Section~5, Lemma~5.3 and Theorem~5.3]
{KreinPetuninSemenov}, the first condition in
Definition~\ref{def:admissible-profile} allows \(h^{**}\) to be
replaced by \(h^*\) in the Marcinkiewicz norm. More precisely,
\begin{equation*}
\sup_{0<t<1}\psi(t)h^*(t) \leq \|h\|_{M_\psi} \leq
A_\varphi\sup_{0<t<1}\psi(t)h^*(t).
\end{equation*}

Our main setting will be
\begin{equation}
\label{eq:X-intermediate-general} \Lambda_\varphi\hookrightarrow
X\hookrightarrow L^1,
\end{equation}
where \(X\) is an r.i. space. The second embedding in
\eqref{eq:X-intermediate-general} is automatic on \((0,1)\); we
display it in order to emphasize that \(X\) is an intermediate space
for the ordered couple
\[
(L^1,\Lambda_\varphi).
\]

For \(f\in L^1(0,1)\), define the normalized oscillation
\begin{equation*}
\omega_{\varphi,f}(t) = \frac{f^{**}(t)-f^*(t)}{\varphi(t)}, \qquad
0<t<1.
\end{equation*}
Given \(X\) satisfying \eqref{eq:X-intermediate-general}, set
\begin{equation*}
 \mathcal N_{\varphi,X}(f) =
\|\omega_{\varphi,f}\|_X+\|f\|_1
\end{equation*}
and
\begin{equation*}
\mathcal O_\varphi(X) = \bigl\{ f\in L^1(0,1): \mathcal
N_{\varphi,X}(f)<\infty \bigr\}.
\end{equation*}
We say that \(\mathcal N_{\varphi,X}\) is \emph{r.i.-normable} if
there exists an r.i. space \(Y\) such that
\[
Y=\mathcal O_\varphi(X)
\]
as sets and
\[
\|f\|_Y\simeq\mathcal N_{\varphi,X}(f), \qquad f\in\mathcal
O_\varphi(X).
\]
No normability of \(\mathcal N_{\varphi,X}\) is assumed at this
stage.

Finally, by \eqref{eq:t-oscillation-increasing},
\begin{equation*}
t\varphi(t)\omega_{\varphi,f}(t) = t\bigl(f^{**}(t)-f^*(t)\bigr)
\end{equation*}
is nondecreasing on \((0,1)\).

%%%%%%%%%%%%%%%%%%%%%%%%%%%%%%%%%%%%%%%%%%%%%%%%%%%%%%%%%%%%%%%%%%%%%%%%%%%%%%%
\subsection{Gagliardo geometry of the Lorentz couple}
%%%%%%%%%%%%%%%%%%%%%%%%%%%%%%%%%%%%%%%%%%%%%%%%%%%%%%%%%%%%%%%%%%%%%%%%%%%%%%%

We now make explicit the \(K\)-geometry of the couple \(
(L^1,\Lambda_\varphi). \) The classical sum formula for Lorentz
spaces gives
\begin{equation*}
K(t,f;L^1,\Lambda_\varphi) =
\|f\|_{\Lambda_{\min\{s,t\varphi(s)\}}}, \qquad t>0;
\end{equation*}
see, for example, \cite[Chapter~II]{KreinPetuninSemenov}.

Taking
\[
t=\psi(r)=\frac{r}{\varphi(r)}, \qquad 0<r<1,
\]
the monotonicity of \(\psi\) gives
\[
\min\{s,\psi(r)\varphi(s)\} =
\begin{cases}
s, & 0<s\leq r,\\[1mm]
\psi(r)\varphi(s), & r\leq s<1.
\end{cases}
\]
and therefore
\begin{equation}
\label{eq:prim-K-order} K\bigl(\psi(r),f;L^1,\Lambda_\varphi\bigr) =
\int_0^r f^*(s)\,ds + \psi(r)\int_r^1 f^*(s)\,d\varphi(s).
\end{equation}

The corresponding Gagliardo point can be obtained directly. Fix
\(0<r<1\), put \(\lambda=f^*(r)\), and consider the decomposition
\[
f=f_{0,r}+f_{1,r},
\]
where
\[
f_{0,r}=\operatorname{sgn}(f)(|f|-\lambda)_+, \qquad
f_{1,r}=f-f_{0,r}.
\]
The standard rearrangement identities for truncations give
\[
\|f_{0,r}\|_1 = \int_0^r f^*(s)\,ds-rf^*(r) =
r\bigl(f^{**}(r)-f^*(r)\bigr),
\]
and
\[
\|f_{1,r}\|_{\Lambda_\varphi} = \varphi(r)f^*(r) + \int_r^1
f^*(s)\,d\varphi(s).
\]
Hence, for every \(t>0\),
\[
K(t,f;L^1,\Lambda_\varphi) \leq r\bigl(f^{**}(r)-f^*(r)\bigr) +
t\left( \varphi(r)f^*(r) + \int_r^1 f^*(s)\,d\varphi(s) \right).
\]
Taking \(t=\psi(r)\), using \(r=\varphi(r)\psi(r)\) and
\eqref{eq:prim-K-order}, equality holds. Therefore
\begin{equation*}
\left( r\bigl(f^{**}(r)-f^*(r)\bigr), \, \varphi(r)f^*(r) + \int_r^1
f^*(s)\,d\varphi(s) \right)
\end{equation*}
is a point of contact of the Gagliardo diagram with the supporting
line corresponding to \(t=\psi(r)\). In particular,
\(r(f^{**}(r)-f^*(r))\) is its first Gagliardo coordinate.

This formulation does not require \(\psi\) to be strictly
increasing. If \(\psi\) is constant on an interval, the supporting
line may have more than one point of contact, and the preceding
construction provides the point corresponding to each \(r\).

%%%%%%%%%%%%%%%%%%%%%%%%%%%%%%%%%%%%%%%%%%%%%%%%%%%%%%%%%%%%%%%%%%%%%%%%%%%%%%%
\section{Aronszajn--Gagliardo extremals and the optimal Banach exterior}
\label{sec:AG}
%%%%%%%%%%%%%%%%%%%%%%%%%%%%%%%%%%%%%%%%%%%%%%%%%%%%%%%%%%%%%%%%%%%%%%%%%%%%%%%

We shall use the classical extremal interpolation construction of
Aronszajn and Gagliardo. Since the abstract theory is standard, we
record only the facts needed below; see
\cite{AronszajnGagliardo1965,BerghLofstrom,BrudnyiKrugljak}.

Throughout this section, let \(X\) be an r.i. space  satisfying
\begin{equation*}
\Lambda_\varphi\hookrightarrow X\hookrightarrow L^1.
\end{equation*}
By K\"othe duality,
\[
L^\infty\hookrightarrow X'\hookrightarrow M_\psi.
\]

We shall write
\begin{equation*}
 \vec L_\varphi=(L^1,\Lambda_\varphi), \qquad
\vec B_\varphi=(L^\infty,M_\psi)
\end{equation*}
for these mutually associate couples.

For either of the couples
\[
\vec A=(A_0,A_1)\in\{\vec L_\varphi,\vec B_\varphi\},
\]
let
\[
\mathfrak T(\vec A) = \left\{ T\text{ linear}: \|T\|_{A_0\to
A_0}\leq1,\quad \|T\|_{A_1\to A_1}\leq1 \right\}.
\]
If \(E\) is an intermediate r.i. space of \(\vec A\), its lower
Aronszajn--Gagliardo extremal is defined by
\begin{equation*}
\|f\|_{E_-} = \sup_{T\in\mathfrak T(\vec A)}\|Tf\|_E.
\end{equation*}

Let
\begin{equation*}
 \mathfrak T_\varphi = \left\{ T\text{ linear}:
\|T\|_{L^1\to L^1}\leq1, \quad
\|T\|_{\Lambda_\varphi\to\Lambda_\varphi}\leq1 \right\}.
\end{equation*}
be the semigroup of exact endomorphisms of \(\vec L_\varphi\).

For an intermediate r.i. space \(E\), relative to either of the
associate couples, we use the canonical K\"othe-dual representative
of the upper extremal,
\begin{equation*}
 E_+:=((E')_-)'.
\end{equation*}
In particular,
\begin{equation}
\label{eq:Xplus-phi} X_+:=((X')_-)',
\end{equation}
where \((X')_-\) is the lower extremal of \(X'\) relative to \(\vec
B_\varphi\).
\begin{remark}
The dual formula \eqref{eq:Xplus-phi} is the most convenient one for
our purposes, but it is equivalent to the classical projective, or
sum, construction of Aronszajn--Gagliardo. Namely, one may form the
space of sums
\[
f=\sum_{j=1}^\infty T_jf_j, \qquad T_j\in\mathfrak T_\varphi, \quad
f_j\in X,
\]
with projective norm
\[
\|f\|_{X_+^{\rm proj}} = \inf \left\{ \sum_{j=1}^\infty\|f_j\|_X:
f=\sum_{j=1}^\infty T_jf_j \right\}.
\]
The corresponding r.i. representative with the Fatou property is
equivalent to \(X_+\). Thus \eqref{eq:Xplus-phi} may be regarded as
the dual realization of the classical sum method; see
\cite[Theorem~2.5.1]{BerghLofstrom} and \cite[Chapter~2,
Section~2.5]{BrudnyiKrugljak}.
\end{remark}

\begin{theorem}[Aronszajn--Gagliardo extremals]
\label{thm:phi-extremals} The spaces \(X_-\) and \(X_+\) are r.i.
interpolation spaces of \(\vec L_\varphi\). Moreover,
\begin{equation*}
\Lambda_\varphi \hookrightarrow X_- \hookrightarrow X
\hookrightarrow X_+ \hookrightarrow L^1.
\end{equation*}
The space \(X_-\) is the greatest r.i. interpolation space contained
in \(X\), while \(X_+\) is the least r.i. interpolation space
containing \(X\). With respect to the mutually associate couples
\(\vec L_\varphi\) and \(\vec B_\varphi\),
\begin{equation}
\label{eq:phi-extremal-duality} (X_-)'\simeq(X')_+, \qquad
(X_+)'\simeq(X')_-.
\end{equation}
Finally,
\begin{equation}
\label{eq:phi-dual-interpolation} X\in\Int(L^1,\Lambda_\varphi)
\quad\Longleftrightarrow\quad X'\in\Int(L^\infty,M_\psi),
\end{equation}
and
\begin{equation*}
\begin{aligned}
X\in\Int(L^1,\Lambda_\varphi) &\quad\Longleftrightarrow\quad
X_-\simeq X\simeq X_+ \\
&\quad\Longleftrightarrow\quad X_-\simeq X_+.
\end{aligned}
\end{equation*}
\end{theorem}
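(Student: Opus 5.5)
The proof follows the classical Aronszajn--Gagliardo scheme, using Köthe duality between $\vec L_\varphi$ and $\vec B_\varphi$. We treat the lower extremal first, then obtain everything about $X_+$ by dualizing.

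\emph{Step 1: the lower extremal.} We first check that $\|\cdot\|_{X_-}$ is finite and lies between $\Lambda_\varphi$ and $X$. Taking $T=\mathrm{Id}\in\mathfrak T_\varphi$ gives $\|f\|_X\le\|f\|_{X_-}$. For every $T\in\mathfrak T_\varphi$,
\[
\|Tf\|_X\le C\|Tf\|_{\Lambda_\varphi}\le C\|f\|_{\Lambda_\varphi},
\]
so $\Lambda_\varphi\hookrightarrow X_-$. Next, $\mathfrak T_\varphi$ is a semigroup, so $\|Sf\|_{X_-}\le\|f\|_{X_-}$ for $S\in\mathfrak T_\varphi$. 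By homogeneity, every $T$ bounded on both endpoints is bounded on $X_-$ with constant $\max$ of its two norms, so $X_-$ is an exact interpolation space. It is rearrangement-invariant because measure-preserving rearrangements and sign changes belong to $\mathfrak T_\varphi$; an approximation argument passing from equal $f^*$ to such operators, via the Fatou property, handles the general case. The Fatou property of $X_-$ follows from that of $X$ together with lower semicontinuity of a supremum of seminorms. Maximality is immediate: if $Y\in\Int$ and $Y\subset X$, then $\|Tf\|_X\lesssim\|Tf\|_Y\lesssim\|f\|_Y$, so $Y\hookrightarrow X_-$. The same argument on $\vec B_\varphi$ produces $(X')_-$ with $L^\infty\hookrightarrow (X')_-\hookrightarrow X'\hookrightarrow M_\psi$.

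\emph{Step 2: the upper extremal and the duality formulas.} Here $X_+=((X')_-)'$. Taking associates of the chain in Step 1 gives $\Lambda_\varphi=M_\psi'\hookrightarrow X=X''\hookrightarrow X_+\hookrightarrow L^1$. The key tool is that adjoints map $\mathfrak T_\varphi$ into $\mathfrak T(\vec B_\varphi)$ and conversely. Under the pairing $\int fg$ we have $(L^1)'=L^\infty$ and $(\Lambda_\varphi)'=M_\psi$. Conversely, a contraction $S$ on $(L^\infty,M_\psi)$ has a restricted adjoint acting on $(L^1,\Lambda_\varphi)$; since these are Köthe duals, one uses the order-continuous parts or a Fatou/weak$^*$ argument.

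Given this, $X_+$ is an interpolation space for $\vec L_\varphi$, because $Z\in\Int(\vec B_\varphi)$ implies $Z'\in\Int(\vec L_\varphi)$ via $\|Tf\|_{Z'}=\sup_{\|g\|_Z\le1}\int Tf\,g=\sup\int f\,T^*g$. The same computation gives \eqref{eq:phi-dual-interpolation}, using $X''=X$. Minimality of $X_+$ follows: if $Y\supset X$ is an interpolation space, then $Y'\subset X'$ is an interpolation space for $\vec B_\varphi$, hence $Y'\hookrightarrow (X')_-$, and therefore $X_+\hookrightarrow Y''=Y$.

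For \eqref{eq:phi-extremal-duality}, the identity $(X_+)'=(X')_-''=(X')_-$ is immediate from Fatou. For $(X_-)'$, we check that it is an interpolation space for $\vec B_\varphi$ containing $X'$, which gives $(X')_+\hookrightarrow (X_-)'$. Conversely, $((X')_+)'$ is an interpolation space for $\vec L_\varphi$ contained in $X''=X$, hence it embeds into $X_-$; dualizing gives the reverse inclusion.

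\emph{Step 3: the characterization.} If $X\in\Int$, then maximality and minimality force $X_-\simeq X\simeq X_+$. Conversely, if $X_-\simeq X_+$, the sandwich $X_-\hookrightarrow X\hookrightarrow X_+$ gives $X\simeq X_-$, which is an interpolation space.

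The main obstacle is the adjoint correspondence in Step 2. $L^\infty$ and $M_\psi$ are not separable, so a contraction on $\vec B_\varphi$ need not be a true adjoint. I would handle this by working only with the Köthe pairing and Fatou norms. Concretely, I would compute $\|\cdot\|_{Z'}$ by testing against simple functions, and replace a general $S$ by its restriction to the order-continuous parts, or invoke the standard fact (Brudnyi--Krugljak) that interpolation properties of Köthe-dual couples correspond.
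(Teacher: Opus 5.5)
Your architecture matches the paper's; its proof simply invokes the classical Aronszajn--Gagliardo theorem and K\"othe duality. Several pieces are fine as written: Step~1 for $\vec L_\varphi$, the fact that $X_+=((X')_-)'$ is an interpolation space for $\vec L_\varphi$, and the implication $X'\in\Int(\vec B_\varphi)\Rightarrow X\in\Int(\vec L_\varphi)$. These only use that every $T\in\mathfrak T_\varphi$ has an adjoint in $\mathfrak T(\vec B_\varphi)$, which holds because $L^1$ and $\Lambda_\varphi$ are order continuous.

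However, the step you flag is a genuine gap. It carries the minimality of $X_+$, the implication $X\in\Int(\vec L_\varphi)\Rightarrow X'\in\Int(\vec B_\varphi)$, the implication $X\in\Int(\vec L_\varphi)\Rightarrow X_+\simeq X$, and your claim that $(X_-)'\in\Int(\vec B_\varphi)$. (With the paper's convention $E_+=((E')_-)'$, the identity $(X')_+=(X_-)'$ is actually definitional.) All of these need $Y\in\Int(\vec L_\varphi)\Rightarrow Y'\in\Int(\vec B_\varphi)$ against arbitrary $S\in\mathfrak T(\vec B_\varphi)$.

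Your first remedy cannot work. On $(0,1)$ the order-continuous part of $L^\infty$ is $\{0\}$, and $\mathfrak T(\vec B_\varphi)$ contains nonzero operators vanishing on all of $L^\infty$. For instance, take $Sh=\mathrm{LIM}_k\bigl(\psi(2^{-k})\,2^{k}\int_0^{2^{-k}}h(s)\,ds\bigr)\chi_{(0,1)}$ with $\mathrm{LIM}$ a Banach limit. It is a contraction on $M_\psi$, it is zero on $L^\infty$ since $\psi(0+)=0$, and $S(1/\psi)\neq0$.

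The same operator refutes your claim that $(X')_-$ has the Fatou property ``by the same argument''. Indeed $S(\min\{1/\psi,n\})=0$ for all $n$, so $h\mapsto\|Sh\|_{X'}$ is not lower semicontinuous along increasing sequences. Hence $(X_+)'=(X')_-$ is not ``immediate from Fatou''. Rearrangement invariance of $(X')_-$ should likewise come from Calder\'on's theorem, whose operators are contractions on every r.i. space, rather than from a Fatou approximation.

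Here is one way to close the gap.
\begin{itemize}
\item Given $S\in\mathfrak T(\vec B_\varphi)$ and $f\in\Lambda_\varphi$, let $Tf\in(M_\psi)'=\Lambda_\varphi$ represent the order-continuous part of the functional $g\mapsto\int f\,Sg$ on $M_\psi$.
\item Singular functionals vanish on $(M_\psi)_a\supset L^\infty$, so $\int Tf\,g=\int f\,Sg$ for $g\in L^\infty$. This gives $\|Tf\|_1\le\|f\|_1$. Also $\|Tf\|_{\Lambda_\varphi}\le\|f\|_{\Lambda_\varphi}$, since band projections are contractive.
\item So $T$ extends to an element of $\mathfrak T_\varphi$, and $S$ coincides with the K\"othe adjoint $T^\times$ on the $M_\psi$-closure of $L^\infty$, namely $\{g:\psi(t)g^{**}(t)\to0\}$.
\end{itemize}
Now let $Y\in\Int(\vec L_\varphi)$ with constant $C$, and split into two cases.
\begin{itemize}
\item If $Y'$ lies in that closure, your pairing computation gives $\|Sg\|_{Y'}=\|T^\times g\|_{Y'}\le C\|g\|_{Y'}$.
\item Otherwise pick $g\in Y'$ and $t_k\downarrow0$ with $\psi(t_k)g^{**}(t_k)\ge c>0$. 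Consider the rank-one operators $T_sf=\bigl(\int f\,w_s\bigr)s^{-1}\chi_{(0,s)}$, where $w_s=\psi(s)\psi^{-1}\chi_{[s,1)}$. They have norm at most $1$ on $L^1$ and at most $A_\varphi$ on $\Lambda_\varphi$, and $T_{t_k}^\times g^*\ge c\,\psi^{-1}\chi_{[t_k,1)}$. The Fatou property of $Y'$ then gives $1/\psi\in Y'$, whence $Y'\simeq M_\psi$, an endpoint of $\vec B_\varphi$.
\end{itemize}
If $\psi(0+)>0$, then $\Lambda_\varphi\simeq L^1$ and everything is trivial.

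With this direction available, the Fatou issue resolves itself. The space $((X')_-)''=(X_+)'$ is an interpolation space for $\vec B_\varphi$ contained in $X'$. Maximality of $(X')_-$ then gives $(X_+)'\hookrightarrow(X')_-\hookrightarrow((X')_-)''=(X_+)'$, i.e.\ $(X_+)'\simeq(X')_-$.
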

\begin{proof}
The extremal construction, including the projective realization and
the greatest--least properties, is the classical
Aronszajn--Gagliardo theorem; see
\cite[Theorem~2.5.1]{BerghLofstrom} and \cite[Chapter~2,
Section~2.5]{BrudnyiKrugljak}. K\"othe duality for the mutually
associate couples, together with the Fatou identity \(E''=E\), gives
\[
(X_-)' \simeq (X')_+, \qquad (X_+)' \simeq (X')_-;
\]
see also \cite{BennettSharpley}. The remaining assertions follow
from these extremal properties and K\"othe duality.
\end{proof}

The lower extremal on the associate side admits an explicit supremal
realization. Define
\begin{equation*}
 S_\psi h(t) = \frac1{\psi(t)} \sup_{0<s\leq
t}\psi(s)h^*(s), \qquad 0<t<1.
\end{equation*}

\begin{proposition}
\label{prop:dual-extremal} Let \(E\) be an intermediate r.i. space
of \(\vec B_\varphi\). Then\footnote{Here and below, a subscript in
\(\simeq\) indicates the parameters on which the implicit constants
may depend.}
\begin{equation}
\label{eq:K-Spsi} K\left(\frac1{\psi(r)},h;L^\infty,M_\psi\right)
\simeq_\varphi S_\psi h(r), \qquad 0<r<1,
\end{equation}
and
\begin{equation}
\label{eq:K-S-equivalence} K(t,S_\psi h;\vec B_\varphi)
\simeq_\varphi K(t,h;\vec B_\varphi), \qquad t>0.
\end{equation}
Consequently,
\begin{equation}
\label{eq:S-interpolation} E\in\Int(L^\infty,M_\psi)
\quad\Longleftrightarrow\quad S_\psi:E\longrightarrow E \text{ is
bounded},
\end{equation}
and the lower extremal of \(E\) relative to \(\vec B_\varphi\) is
\begin{equation}
\label{eq:dual-core-general} E_- = \{h:S_\psi h\in E\}, \qquad
\|h\|_{E_-} \simeq_{\varphi,E} \|S_\psi h\|_E.
\end{equation}
\end{proposition}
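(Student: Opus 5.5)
We prove the four assertions in order; the argument is built on the $K$-functional formula \eqref{eq:K-Spsi}.

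\emph{Step 1: the $K$-functional.} For $K(1/\psi(r),h;L^\infty,M_\psi)$ we use that $M_\psi$ can be normed by $\sup_s\psi(s)h^*(s)$, up to $A_\varphi$. Every decomposition $h=h_0+h_1$ satisfies $h^*(s)\le \|h_0\|_\infty+h_1^*(s)$. Multiplying by $\psi(s)\le\psi(r)$ for $s\le r$ gives
\[
\psi(s)h^*(s)\le \psi(r)\|h_0\|_\infty+\|h_1\|_{M_\psi}.
\]
Dividing by $\psi(r)$ yields $S_\psi h(r)\lesssim K$. For the reverse inequality, truncate at height $\lambda=S_\psi h(r)$: put $h_0=\operatorname{sgn}(h)\min(|h|,\lambda)$ and $h_1=h-h_0$. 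Then $\|h_0\|_\infty\le\lambda$, and $h_1^*=(h^*-\lambda)_+$. For $s\le r$ we have $\psi(s)h^*(s)\le\psi(r)\lambda$. For $s\ge r$, $h^*(s)\le h^*(r)\le\lambda$, so $h_1^*(s)=0$. Hence $\sup_s\psi(s)h_1^*(s)\le \psi(r)\lambda$, and we obtain $K\lesssim_\varphi \lambda$.

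\emph{Step 2: \eqref{eq:K-S-equivalence}.} By definition $S_\psi h(t)\ge h^*(t)$, and $S_\psi h$ is nonincreasing, since $\psi(t)S_\psi h(t)$ is nondecreasing and $1/\psi$ is nonincreasing. Therefore $(S_\psi h)^*\ge h^*$ and $K(t,h)\le K(t,S_\psi h)$. For the converse it suffices, by Step 1 and the parametrization $t=1/\psi(r)$, to prove $S_\psi(S_\psi h)\lesssim S_\psi h$. The function $\psi(s)S_\psi h(s)$ is nondecreasing, so $\sup_{s\le r}\psi(s)S_\psi h(s)=\psi(r)S_\psi h(r)$, and $S_\psi$ is idempotent. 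The parameters $t$ outside the range of $1/\psi$ (large $t$, or $t\le 1$ where $K\simeq\|\cdot\|_\infty$ bounds apply) are handled by the same argument with $r\to0$ or $r=1$. There is one subtlety: $1/\psi$ takes the values $\varphi(r)/r$, which range over $(1,\infty)$ possibly with flat pieces. Concavity of the $K$-functional handles those gaps.

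\emph{Step 3: \eqref{eq:S-interpolation}.} Suppose $E$ is an interpolation space. For r.i. spaces over this couple we apply the Calderón-type statement given by $K$-monotonicity. Its availability is the main obstacle, and I would derive it from the $K$-divisibility and Brudnyi--Krugljak theory, or use that $(L^\infty,M_\psi)$ is a Calderón couple, which is classical for Marcinkiewicz couples under the Hardy conditions. With this, \eqref{eq:K-S-equivalence} and $K$-monotonicity give $\|S_\psi h\|_E\lesssim\|h\|_E$. Conversely, suppose $S_\psi$ is bounded on $E$ and $T\in\mathfrak T(\vec B_\varphi)$. Then $K(t,Th)\le K(t,h)$, and Step 1 gives $(Th)^*\le S_\psi(Th)\lesssim S_\psi h$ pointwise. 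Hence
\[
\|Th\|_E\le\|S_\psi(Th)\|_E\lesssim\|S_\psi h\|_E\lesssim\|h\|_E.
\]
This direction needs no Calderón property.

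\emph{Step 4: \eqref{eq:dual-core-general}.} Set $F=\{h:S_\psi h\in E\}$ with the quasi-norm $\|S_\psi h\|_E$. Subadditivity holds since $S_\psi(h+g)\le S_\psi h(\cdot/2\text{-dilated})+S_\psi g$ in the appropriate sense, and the dilation is bounded on r.i. spaces; alternatively, $K$-equivalence gives equivalence to a norm. The argument of Step 3 shows $\|Th\|_F\lesssim\|h\|_F$, so $F$ is an interpolation space. Since $h^*\le S_\psi h$, we have $F\hookrightarrow E$, and therefore $F\hookrightarrow E_-$ by maximality (Theorem~\ref{thm:phi-extremals}). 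Conversely, $E_-$ is an interpolation space, so by the easy half of Step 3 together with the Calderón property of Step 3, $S_\psi$ is bounded on $E_-$. This gives $\|S_\psi h\|_E\le\|S_\psi h\|_{E_-}\lesssim\|h\|_{E_-}$, that is, $E_-\hookrightarrow F$.
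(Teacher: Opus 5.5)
Your proof is correct apart from a few local repairs listed at the end. It follows the paper's architecture. You compute the $K$-functional at $t=1/\psi(r)$, show that $h$ and $S_\psi h$ have equivalent $K$-functionals, and invoke the Calder\'on property of $(L^\infty,M_\psi)$ for the hard direction of \eqref{eq:S-interpolation}. The pointwise bound $(Th)^*\le S_\psi(Th)\lesssim_\varphi S_\psi h$ then gives the converse and the estimate for $E_-$.

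The difference lies in the first two steps. The paper quotes the Cwikel--Nilsson formula $K(u,h;M_\psi,L^\infty)\simeq_\varphi\sup_s\min\{\psi(s),u\}h^*(s)$, valid for all $u>0$. It then proves \eqref{eq:K-S-equivalence} by showing this supremum is unchanged when $h^*$ is replaced by $S_\psi h$. You instead prove \eqref{eq:K-Spsi} by hand: you truncate at height $S_\psi h(r)$, use only the $h^*$-description of $M_\psi$, and get explicit constants $1$ and $1+A_\varphi$. You then deduce \eqref{eq:K-S-equivalence} from the idempotence $S_\psi S_\psi h=S_\psi h$.

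This is more self-contained, but it only covers $t=\varphi(r)/r$, and you only gesture at the remaining values of $t$. Since $\psi\le1$, one has $\|\cdot\|_{M_\psi}\le\|\cdot\|_\infty$, hence exactly $K(t,h)=t\|h\|_{M_\psi}$ for $0<t\le1$. This is what your case $r=1$ gives; the $L^\infty$ formula belongs to the other end. If $\psi(0^+)>0$, then $K(t,h)=\|h\|_\infty=\|S_\psi h\|_\infty$ for $t\ge1/\psi(0^+)$. Flat pieces of $\psi$ create no gaps, because $r\mapsto\varphi(r)/r$ is continuous on $(0,1]$ and its range is an interval.

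The local repairs:
\begin{enumerate}
\item That $S_\psi h$ is nonincreasing does not follow from ``$\psi S_\psi h$ nondecreasing and $1/\psi$ nonincreasing'', since such a product can have either monotonicity. You must use that $h^*$ is nonincreasing: for $t_1<s\le t_2$, $\psi(s)h^*(s)/\psi(t_2)\le h^*(s)\le h^*(t_1)\le S_\psi h(t_1)$.
\item In Step 4, the bound $\|S_\psi h\|_E\lesssim\|h\|_{E_-}$ uses the Calder\'on half of Step 3, not the easy half.
\item The detour in Step 4 through $F$, maximality and quasi-normability is unnecessary. Your Step 3 estimate already bounds $\sup_T\|Th\|_E$ by $\|S_\psi h\|_E$, which is how the paper argues.
\item $K$-divisibility alone does not give the Calder\'on property. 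You need a specific result for Marcinkiewicz couples. The paper uses Cerd\`a--Coll--Mart\'{\i}n: $(F,L^\infty)$ is a universal right Calder\'on couple exactly when $F=M(F)$.
\end{enumerate}
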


\begin{proof}
The classical \(K\)-functional formula for a Marcinkiewicz space and
\(L^\infty\), together with the admissibility condition
\eqref{eq:phi-Hardy}, gives
\[
K(u,h;M_\psi,L^\infty) \simeq_\varphi
\sup_{0<s<1}\min\{\psi(s),u\}\,h^*(s);
\]
see \cite{CwikelNilsson1985}. Reversing the couple gives
\[
K(t,h;L^\infty,M_\psi) = tK(t^{-1},h;M_\psi,L^\infty).
\]
Hence, for \(0<r<1\),
\[
\begin{aligned}
K\left(\frac1{\psi(r)},h;L^\infty,M_\psi\right) &\simeq_\varphi
\frac1{\psi(r)} \sup_{0<s<1}\min\{\psi(s),\psi(r)\}h^*(s)
\\
&= \frac1{\psi(r)} \sup_{0<s\leq r}\psi(s)h^*(s) = S_\psi h(r).
\end{aligned}
\]
Here, in passing to the supremum over \(0<s\leq r\), we have used
that, for \(s>r\),
\[
\min\{\psi(s),\psi(r)\}h^*(s) = \psi(r)h^*(s) \leq \psi(r)h^*(r),
\]
since \(\psi\) is nondecreasing and \(h^*\) is nonincreasing. This
proves \eqref{eq:K-Spsi}.

We next prove \eqref{eq:K-S-equivalence}. First observe that
\(S_\psi h\) is nonincreasing. Indeed, let \(0<t_1<t_2<1\). If
\(s\leq t_1\), then
\[
\frac{\psi(s)h^*(s)}{\psi(t_2)} \leq \frac{\psi(s)h^*(s)}{\psi(t_1)}
\leq S_\psi h(t_1),
\]
whereas, if \(t_1<s\leq t_2\), then
\[
\frac{\psi(s)h^*(s)}{\psi(t_2)} \leq h^*(s) \leq h^*(t_1) \leq
S_\psi h(t_1).
\]
Taking the supremum over \(0<s\leq t_2\) gives
\[
S_\psi h(t_2)\leq S_\psi h(t_1).
\]
Moreover, directly from the definition,
\[
S_\psi h\geq h^*.
\]

Now fix \(u>0\). If \(s\leq t\), then
\[
\min\left\{1,\frac{u}{\psi(t)}\right\}\psi(s) \leq
\min\{\psi(s),u\}.
\]
Therefore
\[
\begin{aligned}
\min\{\psi(t),u\}S_\psi h(t) &=
\min\left\{1,\frac{u}{\psi(t)}\right\} \sup_{0<s\leq t}\psi(s)h^*(s)
\\
&\leq \sup_{0<s\leq t} \min\{\psi(s),u\}h^*(s).
\end{aligned}
\]
Taking the supremum over \(t\), and using \(S_\psi h\geq h^*\) for
the reverse inequality, we obtain
\[
\sup_{0<t<1}\min\{\psi(t),u\}S_\psi h(t) =
\sup_{0<t<1}\min\{\psi(t),u\}h^*(t).
\]
Hence \eqref{eq:K-S-equivalence} follows from the same
\(K\)-functional formula. By \cite[Theorem~9]{CerdaCollMartin2005},
a Banach function space \(F\) satisfies that \((F,L^\infty)\) is a
universal right Calder\'on couple if and only if \(F=M(F)\). Since
\(M_\psi\) is a Marcinkiewicz space, \(M(M_\psi)=M_\psi\).
Therefore, by the standard characterization of interpolation spaces
for Calder\'on couples in terms of \(K\)-monotonicity; see, for
example, \cite{BennettSharpley,BrudnyiKrugljak},
\eqref{eq:S-interpolation} follows.

Finally, if \(T\) is an exact endomorphism of \(\vec B_\varphi\),
then \(K(t,Th)\leq K(t,h)\), and \eqref{eq:K-Spsi} gives
\[
S_\psi(Th)\lesssim_\varphi S_\psi h.
\]
Since
\[
(Th)^*\leq S_\psi(Th),
\]
we obtain
\[
\|Th\|_E \leq \|S_\psi(Th)\|_E \lesssim_\varphi \|S_\psi h\|_E.
\]
Taking the supremum over exact endomorphisms yields
\[
\|h\|_{E_-} \lesssim_\varphi \|S_\psi h\|_E.
\]
Conversely, \(E_-\in\Int(\vec B_\varphi)\) by the extremal theorem,
so \eqref{eq:S-interpolation} gives
\[
S_\psi:E_-\longrightarrow E_-.
\]
Since \(E_-\hookrightarrow E\),
\[
\|S_\psi h\|_E \lesssim \|S_\psi h\|_{E_-} \lesssim \|h\|_{E_-}.
\]
This proves \eqref{eq:dual-core-general}.
\end{proof}

Combining Theorem~\ref{thm:phi-extremals} and
Proposition~\ref{prop:dual-extremal}, we shall repeatedly use
\begin{equation}
\label{eq:pre-collapse} X\in\Int(L^1,\Lambda_\varphi)
\quad\Longleftrightarrow\quad S_\psi:X'\to X'
\quad\Longleftrightarrow\quad X_-\simeq X_+.
\end{equation}
Moreover, by \eqref{eq:phi-extremal-duality} and
\eqref{eq:dual-core-general},
\begin{equation*}
 X_+ \simeq \bigl( \{h:S_\psi h\in
X'\}, \ \|S_\psi h\|_{X'} \bigr)'.
\end{equation*}
Thus the dual realization of the upper Aronszajn--Gagliardo extremal
is completely explicit.

\begin{remark}
For the power profile
\[
\varphi(t)=t^{m/n}, \qquad \psi(t)=t^{1-m/n},
\]
the equivalence
\[
X\in\Int(L^1,L^{n/m,1}) \quad\Longleftrightarrow\quad
S_\psi:X'\longrightarrow X'
\]
appears in \cite[Theorem~2.2]{KermanPick2009}, where it is obtained
by a different argument based on endpoint estimates,
quasisubadditivity of the corresponding supremal operator, and
boundedness of dilations.

More recently, supremal operators of the same type have been used in
\cite{Kubicek2026} to characterize optimal domain and target spaces
in Sobolev embeddings associated with general isoperimetric
profiles. In the notation of that work, taking \(I=\psi=t/\varphi\)
gives
\[
\frac{I(t)}{t}\bigl(f^{**}(t)-f^*(t)\bigr) =
\frac{f^{**}(t)-f^*(t)}{\varphi(t)}.
\]
The present approach places these phenomena in the abstract
interpolation structure of the Lorentz couple
\((L^1,\Lambda_\varphi)\), where the same supremal condition is
identified with the collapse of the Aronszajn--Gagliardo extremals.
\end{remark}

%%%%%%%%%%%%%%%%%%%%%%%%%%%%%%%%%%%%%%%%%%%%%%%%%%%%%%%%%%%%%%%%%%%%%%%%%%%%%%%
\section{Recovery and the optimal Banach exterior}
\label{sec:recovery-exterior}
%%%%%%%%%%%%%%%%%%%%%%%%%%%%%%%%%%%%%%%%%%%%%%%%%%%%%%%%%%%%%%%%%%%%%%%%%%%%%%%

We now connect the interpolation structure obtained above with the
oscillation class. Define
\begin{equation*}
Q_\varphi g(t) = \int_t^1\frac{\varphi(s)}{s}g(s)\,ds.
\end{equation*}
The terminology ``recovery operator'' comes from the identity
\begin{equation}
\label{eq:fstarstar-recovery-general} f^{**}(t) = \|f\|_1 +
Q_\varphi\omega_{\varphi,f}(t), \qquad 0<t<1.
\end{equation}
Indeed, since
\[
\frac{d}{dt}f^{**}(t) = -\frac{\varphi(t)}{t}\omega_{\varphi,f}(t)
\]
for almost every \(t\in(0,1)\), integration from \(t\) to \(1\),
together with \(f^{**}(1)=\|f\|_1\), gives
\eqref{eq:fstarstar-recovery-general}.

To recover decreasing data we shall also use the operator
\begin{equation*}
P_\varphi g(t) = \frac1{t\varphi(t)} \int_0^t\varphi(s)g(s)\,ds.
\end{equation*}

\begin{lemma}
\label{lem:P-phi} The operator \(P_\varphi\) is bounded on \(L^1\)
and \(L^\infty\), with
\[
\|P_\varphi\|_{L^\infty\to L^\infty}\leq1, \qquad
\|P_\varphi\|_{L^1\to L^1}\leq B_\varphi.
\]
Consequently,
\begin{equation}
\label{eq:P-X} P_\varphi:X\longrightarrow X
\end{equation}
for every r.i. space \(X\). Moreover, if \(g=g^*\geq0\), then
\begin{equation}
\label{eq:P-lower} P_\varphi g(t)\geq\frac12g(t), \qquad 0<t<1.
\end{equation}
\end{lemma}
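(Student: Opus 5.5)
The plan is to handle the two endpoint bounds directly, obtain \eqref{eq:P-X} by interpolation, and prove the pointwise lower bound \eqref{eq:P-lower} from concavity of \(\varphi\).

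\emph{The \(L^\infty\) bound.} Since \(\varphi\) is nondecreasing, \(\int_0^t\varphi(s)\,ds\le t\varphi(t)\). Hence \(|P_\varphi g(t)|\le \|g\|_\infty\,\frac{1}{t\varphi(t)}\int_0^t\varphi(s)\,ds\le\|g\|_\infty\), which gives \(\|P_\varphi\|_{L^\infty\to L^\infty}\le1\).

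\emph{The \(L^1\) bound.} Apply Fubini:
\[
\int_0^1|P_\varphi g(t)|\,dt\le\int_0^1\varphi(s)|g(s)|\left(\int_s^1\frac{dt}{t\varphi(t)}\right)ds\le B_\varphi\|g\|_1,
\]
using the second admissibility condition in \eqref{eq:phi-Hardy}. Thus \(\|P_\varphi\|_{L^1\to L^1}\le B_\varphi\).

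\emph{Boundedness on \(X\).} \(P_\varphi\) is linear and bounded on both \(L^1\) and \(L^\infty\). By the Calder\'on--Mityagin theorem, every r.i. space on \((0,1)\) is an exact interpolation space for \((L^1,L^\infty)\). Therefore \(\|P_\varphi\|_{X\to X}\le\max\{1,B_\varphi\}\), which is \eqref{eq:P-X}. Alternatively one can argue directly: \(K(t,P_\varphi g;L^1,L^\infty)\le\max\{1,B_\varphi\}K(t,g)\), so \((P_\varphi g)^{**}\lesssim g^{**}\), and Hardy--Littlewood--P\'olya then gives \eqref{eq:P-X}.

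\emph{The lower bound \eqref{eq:P-lower}.} Let \(g=g^*\ge0\). Since \(g\) is nonincreasing, \(g(s)\ge g(t)\) for \(s\le t\), so
\[
P_\varphi g(t)\ge g(t)\,\frac{1}{t\varphi(t)}\int_0^t\varphi(s)\,ds.
\]
It remains to show \(\int_0^t\varphi(s)\,ds\ge\frac12t\varphi(t)\). This follows from concavity of \(\varphi\) together with \(\varphi(0)=0\): for \(0\le s\le t\) we have \(\varphi(s)\ge\frac{s}{t}\varphi(t)\), and integrating over \((0,t)\) gives exactly \(\frac12t\varphi(t)\).

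None of these steps is a genuine obstacle. The only points needing care are the Fubini exchange, which uses the correct admissibility constant \(B_\varphi\), and noting that the chord inequality \(\varphi(s)\ge\frac{s}{t}\varphi(t)\) is where concavity enters.
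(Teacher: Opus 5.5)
Your proposal is correct and follows the paper's proof essentially step for step. You use monotonicity of \(\varphi\) for the \(L^\infty\) bound, Fubini with \(B_\varphi\) for the \(L^1\) bound, and interpolation between \(L^1\) and \(L^\infty\), where you add the explicit constant \(\max\{1,B_\varphi\}\) via Calder\'on--Mityagin. The lower bound then comes from the chord inequality \(\varphi(s)\geq\frac{s}{t}\varphi(t)\).
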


\begin{proof}
Since \(\varphi\) is increasing,
\[
P_\varphi g(t) \leq \|g\|_\infty \frac1{t\varphi(t)}
\int_0^t\varphi(s)\,ds \leq \|g\|_\infty.
\]
Thus \(P_\varphi:L^\infty\to L^\infty\). For \(g\geq0\), Fubini's
theorem and \eqref{eq:phi-Hardy} give
\[
\begin{aligned}
\int_0^1P_\varphi g(t)\,dt &= \int_0^1g(s)\varphi(s)
\int_s^1\frac{dt}{t\varphi(t)}\,ds
\\
&\leq B_\varphi\int_0^1g(s)\,ds.
\end{aligned}
\]
Hence \(P_\varphi:L^1\to L^1\), and \eqref{eq:P-X} follows by
interpolation.

Finally, if \(g=g^*\geq0\), then
\[
\int_0^t\varphi(s)g(s)\,ds \geq g(t)\int_0^t\varphi(s)\,ds.
\]
By concavity and \(\varphi(0)=0\),
\[
\varphi(s)\geq\frac{s}{t}\varphi(t), \qquad 0<s<t,
\]
and therefore
\[
\int_0^t\varphi(s)\,ds \geq \frac{t\varphi(t)}2.
\]
This proves \eqref{eq:P-lower}.
\end{proof}

The preceding lemma gives the recovery principle that will be used
throughout the rest of the paper.

\begin{theorem}[Recovery and order reflection]
\label{thm:recovery-general} Let \(X\) be an intermediate r.i. space
for \(\vec L_\varphi=(L^1,\Lambda_\varphi)\). Then, for every
\(g=g^*\geq0\) in \(X\),
\begin{equation}
\label{eq:recovery-pointwise} \omega_{\varphi,Q_\varphi g}=P_\varphi
g.
\end{equation}
Consequently,
\begin{equation}
\label{eq:recovery-equivalence} \mathcal N_{\varphi,X}(Q_\varphi g)
\simeq_{\varphi,X}\|g\|_X.
\end{equation}
Moreover, if \(A\) is another intermediate r.i. space for \(\vec
L_\varphi\), then
\begin{equation}
\label{eq:order-reflection-general} \mathcal
O_\varphi(A)\hookrightarrow\mathcal O_\varphi(X)
\quad\Longleftrightarrow\quad A\hookrightarrow X.
\end{equation}
\end{theorem}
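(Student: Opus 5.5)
The plan is to verify \eqref{eq:recovery-pointwise} by a direct computation, read off \eqref{eq:recovery-equivalence} from Lemma~\ref{lem:P-phi}, and then obtain \eqref{eq:order-reflection-general} by testing the hypothesis on the functions \(Q_\varphi g\).

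\emph{Step 1: the pointwise identity.} Fix \(g=g^*\geq0\) in \(X\); then \(g\in L^1\). Put \(f=Q_\varphi g\). The integrand \(\varphi(s)g(s)/s\) is nonnegative, so \(f\) is nonnegative and nonincreasing. It is also continuous on \((0,1)\), hence \(f^*=f\). By Fubini,
\[
\int_0^t Q_\varphi g(u)\,du = tQ_\varphi g(t)+\int_0^t\varphi(s)g(s)\,ds,
\]
and the last integral is finite because \(\varphi\leq1\) and \(g\in L^1\). Dividing by \(t\) gives
\[
f^{**}(t)-f^*(t)=\frac1t\int_0^t\varphi(s)g(s)\,ds=\varphi(t)P_\varphi g(t),
\]
which is \eqref{eq:recovery-pointwise}. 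Taking \(t=1\) in the same Fubini computation gives \(\|f\|_1=\int_0^1\varphi g\leq\|g\|_1\). In particular \(f\in L^1\).

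\emph{Step 2: the norm equivalence.} From Step~1, \(\mathcal N_{\varphi,X}(Q_\varphi g)=\|P_\varphi g\|_X+\|Q_\varphi g\|_1\). For the upper bound, use \eqref{eq:P-X}, the bound \(\|Q_\varphi g\|_1\leq\|g\|_1\), and \(X\hookrightarrow L^1\). For the lower bound, use the pointwise estimate \eqref{eq:P-lower} together with the lattice property of \(X\):
\[
\|g\|_X\leq2\|P_\varphi g\|_X\leq2\,\mathcal N_{\varphi,X}(Q_\varphi g).
\]
This lower bound holds in the extended sense even if \(\|g\|_X\) is not known to be finite a priori. That observation is exactly what Step~3 needs.

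\emph{Step 3: order reflection.} The direction \(\Leftarrow\) is immediate: if \(A\hookrightarrow X\), then \(\|\omega_{\varphi,f}\|_X\lesssim\|\omega_{\varphi,f}\|_A\), so \(\mathcal N_{\varphi,X}\lesssim\mathcal N_{\varphi,A}\).

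For \(\Rightarrow\), take \(g\in A\) and set \(h=g^*\). Then \(h\) is a nonnegative decreasing element of \(A\), and Steps~1--2 applied in \(A\) give \(Q_\varphi h\in\mathcal O_\varphi(A)\) with \(\mathcal N_{\varphi,A}(Q_\varphi h)\lesssim\|g\|_A\). The embedding hypothesis then yields \(\mathcal N_{\varphi,X}(Q_\varphi h)\lesssim\|g\|_A\). Since the lower bound of Step~2 requires only \(h=h^*\geq0\) in \(L^1\), and not \(h\in X\), we get
\[
\|g\|_X=\|h\|_X\leq2\,\mathcal N_{\varphi,X}(Q_\varphi h)\lesssim\|g\|_A.
\]

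If the embedding \(\mathcal O_\varphi(A)\hookrightarrow\mathcal O_\varphi(X)\) is meant only as set inclusion, I would first upgrade it to a norm inequality. The natural tool is a closed-graph or Fatou-type argument, using that the two functionals control \(\|\cdot\|_1\) and are lower semicontinuous under a.e. convergence. I would, however, state the embedding with constants from the outset.

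The main obstacle is the bookkeeping in Step~1. One has to check that the Fubini manipulations are legitimate for \(g\) possibly unbounded near \(0\), and that \(f^*=f\) holds exactly, not just a.e., so that \(\omega_{\varphi,f}\) is computed correctly. Both follow from \(g\in L^1\), \(\varphi\leq1\), and the continuity of \(Q_\varphi g\).
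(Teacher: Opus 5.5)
Your proposal is correct and follows the paper's proof essentially step for step. It uses the same Fubini computation for $\omega_{\varphi,Q_\varphi g}=P_\varphi g$, the same two-sided estimate via Lemma~\ref{lem:P-phi} together with $\|Q_\varphi g\|_1=\int_0^1\varphi g\le\|g\|_1$, and the same chain $\|g^*\|_X\lesssim\mathcal N_{\varphi,X}(Q_\varphi g^*)\lesssim\mathcal N_{\varphi,A}(Q_\varphi g^*)\lesssim\|g\|_A$ for order reflection. Your observation that $\|g\|_X\le 2\|P_\varphi g\|_X$ holds in the extended sense makes explicit what the paper uses tacitly when it applies the recovery estimate to $g\in A$. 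The same observation also settles your set-inclusion worry without any closed-graph argument, which would in any case be awkward on the class $\mathcal O_\varphi(A)$ since it is not a priori linear. Mere inclusion already gives $A\subset X$ as sets, and inclusions between Banach function spaces are automatically continuous.
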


\begin{proof}
Let \(g=g^*\geq0\) and set \(u=Q_\varphi g\). Since \(u\) is
nonincreasing, \(u^*=u\). By Fubini's theorem,
\[
\begin{aligned}
\int_0^t u(s)\,ds &= \int_0^t
\int_s^1\frac{\varphi(r)}{r}g(r)\,dr\,ds
\\
&= \int_0^t\varphi(r)g(r)\,dr +
t\int_t^1\frac{\varphi(r)}{r}g(r)\,dr.
\end{aligned}
\]
Hence
\[
u^{**}(t)-u(t) = \frac1t\int_0^t\varphi(r)g(r)\,dr,
\]
and division by \(\varphi(t)\) gives \eqref{eq:recovery-pointwise}.

By Lemma~\ref{lem:P-phi},
\[
\frac12\|g\|_X \leq \|P_\varphi g\|_X \lesssim_\varphi \|g\|_X.
\]
Moreover,
\[
\begin{aligned}
\|Q_\varphi g\|_1 &= \int_0^1
\int_t^1\frac{\varphi(s)}{s}g(s)\,ds\,dt
\\
&= \int_0^1\varphi(s)g(s)\,ds \leq \|g\|_1 \lesssim_X \|g\|_X.
\end{aligned}
\]
Together with \eqref{eq:recovery-pointwise}, this proves
\eqref{eq:recovery-equivalence}.

If \(A\hookrightarrow X\), then
\[
\mathcal O_\varphi(A)\hookrightarrow\mathcal O_\varphi(X)
\]
follows directly from the definition. Conversely, assume this
embedding and let \(g=g^*\geq0\) belong to \(A\). Then
\[
\begin{aligned}
\|g\|_X &\lesssim \mathcal N_{\varphi,X}(Q_\varphi g)
\\
&\lesssim \mathcal N_{\varphi,A}(Q_\varphi g)
\\
&\lesssim \|g\|_A.
\end{aligned}
\]
Applying this estimate to \(g=f^*\) and using rearrangement
invariance gives
\[
\|f\|_X\lesssim\|f\|_A.
\]
Thus \(A\hookrightarrow X\), proving
\eqref{eq:order-reflection-general}.
\end{proof}

%%%%%%%%%%%%%%%%%%%%%%%%%%%%%%%%%%%%%%%%%%%%%%%%%%%%%%%%%%%%%%%%%%%%%%%%%%%%%%%
\subsection{The optimal Banach exterior}
%%%%%%%%%%%%%%%%%%%%%%%%%%%%%%%%%%%%%%%%%%%%%%%%%%%%%%%%%%%%%%%%%%%%%%%%%%%%%%%

The adjoint of \(Q_\varphi\) with respect to the integral pairing is
\begin{equation*}
 R_\varphi h(t) = \frac{\varphi(t)}{t}\int_0^t
h(s)\,ds = \frac1{\psi(t)}\int_0^t h(s)\,ds.
\end{equation*}
Indeed, Tonelli's theorem gives, for nonnegative measurable \(u\)
and \(h\),
\begin{equation}
\label{eq:Q-R-general} \int_0^1Q_\varphi u(t)h(t)\,dt =
\int_0^1u(t)R_\varphi h(t)\,dt.
\end{equation}

Define
\[
Y_X = \left\{ h:\|\varphi(t)h^{**}(t)\|_{X'}<\infty \right\},
\]
with
\begin{equation}
\label{eq:Y-X-general} \|h\|_{Y_X} = \|R_\varphi h^*\|_{X'} =
\|\varphi(t)h^{**}(t)\|_{X'}.
\end{equation}
By the standard construction of symmetric spaces defined through
\(h^{**}\), \(Y_X\) is an r.i. space; see
\cite[p.~125]{KreinPetuninSemenov}.

In the power case \(\varphi(t)=t^\alpha\), one has
\[
R_\varphi h^*(t)=t^\alpha h^{**}(t),
\]
and the space \(Y_X'\) recovers the optimal r.i. range associated
with the Copson operator \(Q_\alpha\) in
\cite{KermanPick2006,KermanPick2009}. The argument below recovers
this optimal range, and its minimality, directly from the present
duality and recovery framework.

As usual, minimality of an r.i. space will always be understood with
respect to continuous embeddings.

\begin{theorem}[The optimal Banach exterior]
\label{thm:optimal-exterior} Let \(X\) be an intermediate r.i. space
for \(\vec L_\varphi\). Then \(Y_X'\) is the least r.i. space
containing \(\mathcal O_\varphi(X)\), and
\begin{equation*}
Y_X' \simeq \mathcal O_\varphi(X_+).
\end{equation*}
Equivalently, the optimal Banach exterior is obtained through the
factorization
\begin{equation*}
X \longmapsto X_+ \longmapsto \mathcal O_\varphi(X_+) \simeq Y_X'.
\end{equation*}
\end{theorem}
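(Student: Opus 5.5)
The plan has four steps: the embedding $\mathcal O_\varphi(X)\hookrightarrow Y_X'$, minimality of $Y_X'$, the identity $Y_X\simeq Y_{X_+}$, and the identification $Y_X'\simeq\mathcal O_\varphi(X_+)$. The tools are the recovery identity \eqref{eq:fstarstar-recovery-general}, the adjoint relation \eqref{eq:Q-R-general}, Theorem~\ref{thm:recovery-general}, and the dual description of $X_+$ from Proposition~\ref{prop:dual-extremal}.

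\textbf{Step 1: $\mathcal O_\varphi(X)\hookrightarrow Y_X'$.} Let $f\in\mathcal O_\varphi(X)$ and let $h$ be arbitrary. Since $h^*$ is decreasing and $\int_0^t f^*\le\int_0^t f^{**}$, Hardy's lemma gives $\int f^*h^*\le\int f^{**}h^*$. I then insert $f^{**}=\|f\|_1+Q_\varphi\omega_{\varphi,f}$ and use \eqref{eq:Q-R-general}:
\[
\int_0^1 f^*h^*\le \|f\|_1\|h\|_1+\|\omega_{\varphi,f}\|_X\,\|R_\varphi h^*\|_{X'} .
\]
The last factor is $\|h\|_{Y_X}$. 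It remains to bound $\|h\|_1$ by $\|h\|_{Y_X}$. For $t\ge1/2$ we have $\varphi(t)h^{**}(t)\ge\varphi(1/2)\|h\|_1$, hence $\|h\|_{Y_X}\ge\varphi(1/2)\,\phi_{X'}(1/2)\,\|h\|_1$. This yields $\|f\|_{Y_X'}\lesssim\mathcal N_{\varphi,X}(f)$.

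\textbf{Step 2: minimality.} Let $Z$ be an r.i. space with $\mathcal O_\varphi(X)\hookrightarrow Z$. For $g=g^*\ge0$ in $X$, \eqref{eq:recovery-equivalence} gives $\|Q_\varphi g\|_Z\lesssim\|g\|_X$. Dualizing through \eqref{eq:Q-R-general}, for $h\in Z'$ we get
\[
\sup\Bigl\{\int_0^1 g\,R_\varphi h^*:\ g=g^*\ge0,\ \|g\|_X\le1\Bigr\}\lesssim\|h\|_{Z'} .
\]
To conclude $Z'\hookrightarrow Y_X$, and hence $Y_X'\hookrightarrow Z$, I must replace the supremum over decreasing $g$ by the full $X'$-norm of $R_\varphi h^*=\varphi h^{**}$. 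I expect this to be the \textbf{main obstacle}: $\varphi h^{**}$ need not be decreasing, so this is a down-space duality question.

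My first attempt is to show that $\varphi h^{**}$ is equivalent in $X'$ to a decreasing function, such as $\sup_{s\ge t}\varphi(s)h^{**}(s)$ or $h^{**}$ times a level-type modification. The idea is to use the quasi-concavity of $\psi$ together with the Hardy condition $B_\varphi$. That condition already gives $P_\varphi$ bounded on every r.i. space (Lemma~\ref{lem:P-phi}), and its adjoint is an averaging operator in the reverse direction, which is what one needs to absorb the non-monotone part.

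If this fails in general, I would instead work with the down-space of $X'$ throughout. This means redefining the pairing only over decreasing $g$, which is legitimate because both $Q_\varphi$ and $\mathcal N$ see only $f^*$. The task then becomes verifying that this down-norm of $\varphi h^{**}$ is what the $Y_X$-norm is computing up to constants.

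\textbf{Step 3: $Y_X\simeq Y_{X_+}$.} Theorem~\ref{thm:phi-extremals} and \eqref{eq:dual-core-general} give $(X_+)'\simeq (X')_-$ with norm $\|S_\psi\cdot\|_{X'}$. So it suffices to show $S_\psi(\varphi h^{**})\simeq\varphi h^{**}$ modulo the decreasing-majorant issue of Step~2. The key observation is that $\psi(s)\varphi(s)h^{**}(s)=\int_0^s h^*$ is increasing. Hence, once the decreasing rearrangement is controlled, the supremum defining $S_\psi$ is essentially attained at $s=t$, and one gets $\|h\|_{Y_{X_+}}\simeq\|h\|_{Y_X}$.

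\textbf{Step 4: $Y_X'\simeq\mathcal O_\varphi(X_+)$.} Applying Step~1 to $X_+$ and using Step~3 gives $\mathcal O_\varphi(X_+)\hookrightarrow Y_{X_+}'\simeq Y_X'$. For the reverse embedding $Y_X'\hookrightarrow\mathcal O_\varphi(X_+)$, I plan to approximate $f\in Y_X'$ through the Aronszajn--Gagliardo sum realization of $X_+$: write $f^{**}-\|f\|_1$ as $Q_\varphi$ of a decreasing $g$ with $\|g\|_{X_+}\lesssim\|f\|_{Y_X'}$. The existence of such a $g$ comes from duality (Hahn--Banach applied to the pairing from Step~2), and recovery \eqref{eq:recovery-pointwise} then controls $\omega_{\varphi,f}$ by $P_\varphi g$ in $X_+$. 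Combining this with Steps~1--3 yields the factorization $X\mapsto X_+\mapsto\mathcal O_\varphi(X_+)\simeq Y_X'$.
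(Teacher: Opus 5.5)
\paragraph{The gap: $Y_X'\hookrightarrow\mathcal O_\varphi(X_+)$.} The genuine gap is the reverse inclusion $Y_X'\hookrightarrow\mathcal O_\varphi(X_+)$ in Step~4. Steps 1--3 are correct or easily completed; Step~1 is essentially the paper's own argument.

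Your Hahn--Banach plan cannot work as described. $Q_\varphi$ is injective (differentiate), and $f^{**}-\|f\|_1=Q_\varphi\omega_{\varphi,f}$. So the only $g$ with $Q_\varphi g=f^{**}-\|f\|_1$ is $g=\omega_{\varphi,f}$, and ``$\|g\|_{X_+}\lesssim\|f\|_{Y_X'}$'' is literally the inequality to be proved. Such a $g$ is also not decreasing in general: for $f=\chi_{(0,a)}$ one gets $\omega_{\varphi,f}=\frac{a}{t\varphi(t)}\chi_{(a,1)}$. Moreover, \eqref{eq:recovery-pointwise} computes the oscillation of the function $Q_\varphi g$, not of $f$.

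More fundamentally, nothing in the mechanism you describe uses that $X_+$ is an interpolation space, and that property is indispensable. With $X$ in place of $X_+$, Step~1 plus an inclusion $Y_X'\hookrightarrow\mathcal O_\varphi(X)$ would make $\mathcal N_{\varphi,X}$ equivalent to the norm of $Y_X'$. This fails in general, e.g.\ for $X=L^{p,q}$ with $q>1$ and $\varphi(t)=t^{1/p}$.

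\paragraph{The missing idea.} What is missing is the normability lemma the paper proves first: if $E\in\Int(L^1,\Lambda_\varphi)$, then $\mathcal N_{\varphi,E}$ is equivalent to an r.i.\ Banach norm. Applied to $E=X_+$, it makes $\mathcal O_\varphi(X_+)$ an r.i.\ space containing $\mathcal O_\varphi(X)$ (because $X\hookrightarrow X_+$). Your minimality statement from Step~2 then gives $Y_X'\hookrightarrow\mathcal O_\varphi(X_+)$ at once. The lemma rests on three points:
\begin{enumerate}[label=\textup{(\alph*)}]
\item For $u=\omega_{\varphi,f}$, the function $t\varphi(t)u(t)$ is nondecreasing, so $u\lesssim\mathcal Qu$ on $(0,1/2)$, where $\mathcal Qu(t)=\int_t^1u(s)\,ds/s$. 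Since $\mathcal Q$ is bounded on $E$, one may compute $\|u\|_E+\|f\|_1$ by testing only against decreasing $g$.
\item Boundedness of $S_\psi$ on $E'$ further reduces to decreasing $g$ with $\psi g$ nondecreasing.
\item For such $g$, integration by parts gives $\int_0^1ug+g(1)\|f\|_1=\sup_{0<a<1}[\psi(a)g(a)f^{**}(a)+\int_{(a,1]}f^{**}\,d(\psi g)]$. This is a positive functional of $f^{**}$, hence subadditive in $f$.
\end{enumerate}
The boundedness of $\mathcal Q$ on $E$ and of $S_\psi$ on $E'$ is exactly where $E\in\Int(L^1,\Lambda_\varphi)$ is used.

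\paragraph{Two smaller corrections.} In Step~2 the obstacle you anticipate does not exist. The Fubini computation behind \eqref{eq:recovery-pointwise} only needs $g\ge0$, since $Q_\varphi g$ is then automatically nonincreasing. Also, $P_\varphi$ is bounded on $X$ with no monotonicity assumption. Hence $\|Q_\varphi g\|_Z\lesssim\|g\|_X$ for every $g\ge0$, and dualizing over all nonnegative $g$ gives $\|R_\varphi h^*\|_{X'}\lesssim\|h\|_{Z'}$ directly. Drop the down-space detour, which would change the space.

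In Step~3, $S_\psi$ acts on $(\varphi h^{**})^*$, not on $\varphi h^{**}$. So the heuristic ``the supremum is attained at $s=t$'' needs the least decreasing majorant $G(t)=\sup_{r\ge t}\varphi(r)h^{**}(r)$. Concavity of $r\mapsto\int_0^rh^*$ gives $G(t)\le2(\varphi h^{**})^*(t/2)$. Since $\psi G$ is nondecreasing, $S_\psi(\varphi h^{**})\le S_\psi G=G$, and $G$ is controlled in every r.i.\ space by boundedness of dilations.
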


\begin{proof}
We first record the normability fact needed below. Let
\[
E\in\Int(L^1,\Lambda_\varphi), \qquad u=\omega_{\varphi,f},
\]
and set
\[
D_E(u) = \sup_{\substack{g=g^*\geq0\\ \|g\|_{E'}\leq1}}
\int_0^1u(t)g(t)\,dt.
\]
By H\"older's inequality,
\[
D_E(u)\leq\|u\|_E.
\]

Recall that
\[
t\varphi(t)u(t) = t\bigl(f^{**}(t)-f^*(t)\bigr)
\]
is nondecreasing. Hence, if \(0<t<1/2\) and \(t\leq s\leq2t\), then,
using \(s\leq2t\) and \(\varphi(s)\leq\varphi(2t)\leq2\varphi(t)\),
\[
u(s)\geq\frac14u(t).
\]
Thus, for
\[
\mathcal Qu(t)=\int_t^1u(s)\,\frac{ds}{s},
\]
\[
\mathcal Qu(t) \geq \frac{\log2}{4}\,u(t), \qquad 0<t<1/2.
\]
For \(1/2\leq t<1\),
\[
u(t) \leq \frac{f^{**}(t)}{\varphi(t)} \lesssim \|f\|_1.
\]
Consequently,
\begin{equation}
\label{eq:u-Copson-norm} \|u\|_E \lesssim_E \|\mathcal
Qu\|_E+\|f\|_1.
\end{equation}

The operator \(\mathcal Q\) is bounded on \(L^1\), and its
boundedness on \(\Lambda_\varphi\) follows from
\(A_\varphi<\infty\). Hence \(\mathcal Q:E\to E\), and its adjoint
Hardy operator
\[
\mathcal Pg(t) = \frac1t\int_0^tg(s)\,ds
\]
is bounded on \(E'\). Since \(\mathcal Qu\) and \(\mathcal Pg\) are
nonincreasing for \(g=g^*\geq0\), K\"othe duality and Fubini's
theorem give
\[
\|\mathcal Qu\|_E = \sup_{\substack{g=g^*\geq0\\ \|g\|_{E'}\leq1}}
\int_0^1u(t)\mathcal Pg(t)\,dt \lesssim_E D_E(u).
\]
Together with \eqref{eq:u-Copson-norm}, this yields
\begin{equation}
\label{eq:D-equivalence-main} \|u\|_E+\|f\|_1 \simeq_{\varphi,E}
D_E(u)+\|f\|_1.
\end{equation}

Since \(E\in\Int(L^1,\Lambda_\varphi)\), K\"othe duality and
Proposition~\ref{prop:dual-extremal} give
\[
E'\in\Int(L^\infty,M_\psi), \qquad S_\psi:E'\longrightarrow E'.
\]
Recall that
\[
S_\psi g(t) = \frac1{\psi(t)} \sup_{0<s\leq t}\psi(s)g^*(s).
\]
For \(g=g^*\geq0\), \(S_\psi g\) is nonincreasing, \(S_\psi g\geq
g\), and \(\psi S_\psi g\) is nondecreasing. Thus, with
\[
\mathcal C_\psi = \left\{ g=g^*\geq0: \psi(t)g(t)\text{ is
nondecreasing} \right\},
\]
the boundedness of \(S_\psi\) on \(E'\) gives
\begin{equation}
\label{eq:D-cone} D_E(u) \simeq_{\varphi,E}
\sup_{\substack{g\in\mathcal C_\psi\\ \|g\|_{E'}\leq1}}
\int_0^1u(t)g(t)\,dt.
\end{equation}

Fix \(g\in\mathcal C_\psi\), write \(g(1)=g(1-)\), and set
\[
A_g(t)=\psi(t)g(t).
\]
Then \(A_g\) is nondecreasing and
\[
u(t)g(t) = -A_g(t)\bigl(f^{**}\bigr)'(t).
\]
Since \(f^{**}\) is locally absolutely continuous, Stieltjes
integration by parts on \([a,1]\) gives, for \(0<a<1\),
\begin{equation}
\label{eq:integration-parts-cutoff} \int_a^1u(t)g(t)\,dt+g(1)\|f\|_1
= A_g(a)f^{**}(a) + \int_{(a,1]}f^{**}(t)\,dA_g(t).
\end{equation}
As \(a\downarrow0\), the left-hand side increases to
\[
\int_0^1u(t)g(t)\,dt+g(1)\|f\|_1.
\]
Hence, by \eqref{eq:integration-parts-cutoff} and monotone
convergence,
\[
\sup_{0<a<1} \left[ A_g(a)f^{**}(a) + \int_{(a,1]}f^{**}(t)\,dA_g(t)
\right] = \int_0^1u(t)g(t)\,dt+g(1)\|f\|_1.
\]
We define
\[
\|f\|_{\widetilde E} = \|f\|_1 + \sup_{\substack{g\in\mathcal
C_\psi\\ \|g\|_{E'}\leq1}} \sup_{0<a<1} \left[ A_g(a)f^{**}(a) +
\int_{(a,1]}f^{**}(t)\,dA_g(t) \right].
\]
This defines an r.i. Banach function norm. Rearrangement invariance
and lattice monotonicity are immediate, while the triangle
inequality follows from \eqref{eq:fss-subadditivity}. The
\(L^1\)-term gives the required local integrability. Moreover, if
\(|A|=a\), then
\[
\omega_{\varphi,\chi_A}(t) = \frac{a}{t\varphi(t)} \chi_{(a,1)}(t)
\in L^\infty\subset E,
\]
so \eqref{eq:D-cone} and \eqref{eq:integration-parts-cutoff} show
that \(\|\chi_A\|_{\widetilde E}<\infty\). The Fatou property
follows by monotone convergence. Finally, if
\[
\sum_{k=1}^\infty\|f_k\|_{\widetilde E}<\infty,
\]
then the inequality
\[
\left(\sum_{k=1}^\infty|f_k|\right)^{**} \leq \sum_{k=1}^\infty
f_k^{**},
\]
together with monotone convergence, gives the usual Riesz--Fischer
argument and hence completeness.

Since
\[
g(1)\|\chi_{(0,1)}\|_{E'}\leq1
\]
whenever \(\|g\|_{E'}\leq1\), \eqref{eq:integration-parts-cutoff},
\eqref{eq:D-cone}, and \eqref{eq:D-equivalence-main} yield
\begin{equation}
\label{eq:normability-interpolation-space} \|f\|_{\widetilde E}
\simeq_{\varphi,E} \|\omega_{\varphi,f}\|_E+\|f\|_1 = \mathcal
N_{\varphi,E}(f).
\end{equation}
Thus \(\mathcal O_\varphi(E)\) is r.i.-normable whenever
\(E\in\Int(L^1,\Lambda_\varphi)\).

We now turn to \(Y_X'\). By \eqref{eq:Q-R-general} and
\eqref{eq:Y-X-general},
\[
\|Q_\varphi|u|\|_{Y_X'} \lesssim \|u\|_X, \qquad u\in X.
\]
Hence, using
\[
f^{**} = \|f\|_1+Q_\varphi\omega_{\varphi,f}
\]
and \(f^*\leq f^{**}\),
\begin{equation*}
 \mathcal O_\varphi(X) \hookrightarrow
Y_X'.
\end{equation*}

Let \(Y\) be any r.i. space containing \(\mathcal O_\varphi(X)\).
For \(g\geq0\),
\[
\omega_{\varphi,Q_\varphi g} = P_\varphi g, \qquad \|Q_\varphi g\|_1
= \int_0^1\varphi(s)g(s)\,ds \leq \|g\|_1.
\]
Thus Lemma~\ref{lem:P-phi} gives \(Q_\varphi:X\to Y\). By duality,
\(R_\varphi:Y'\to X'\), and hence
\[
Y'\hookrightarrow Y_X.
\]
K\"othe duality therefore gives
\[
Y_X'\hookrightarrow Y.
\]
Hence \(Y_X'\) is the least r.i. space containing \(\mathcal
O_\varphi(X)\).

Since \(X_+\in\Int(L^1,\Lambda_\varphi)\),
\eqref{eq:normability-interpolation-space} shows that \(\mathcal
O_\varphi(X_+)\) is r.i.-normable. Moreover, \(X\hookrightarrow
X_+\), and hence
\[
\mathcal O_\varphi(X) \hookrightarrow \mathcal O_\varphi(X_+).
\]
By minimality,
\begin{equation}
\label{eq:Yprime-into-O-Xplus} Y_X' \hookrightarrow \mathcal
O_\varphi(X_+).
\end{equation}

For the reverse inclusion we use the following range estimate:
\begin{equation}
\label{eq:range-stability-general} \|S_\psi(R_\varphi h^*)\|_E
\lesssim_E \|R_\varphi h^*\|_E
\end{equation}
for every r.i. space \(E\). Indeed, set
\[
F(r)=\int_0^rh^*(s)\,ds, \qquad U(r)=\frac{F(r)}{\psi(r)}, \qquad
G(t)=\sup_{r\geq t}U(r).
\]
For \(r\geq t\) and \(r/2\leq s\leq r\), concavity of \(F\) and
monotonicity of \(\psi\) give
\[
U(s)\geq\frac12U(r).
\]
Thus the set on which
\[
U(s)\geq\frac12U(r)
\]
has measure at least \(r/2\geq t/2\), and therefore
\[
U^*(t/2)\geq\frac12U(r).
\]
Taking the supremum over \(r\geq t\), we obtain
\[
G(t)\leq2U^*(t/2).
\]
Since dilations are bounded on every r.i. space, see
\cite{BennettSharpley},
\[
\|G\|_E \lesssim_E \|U\|_E.
\]
Moreover, \(G\) is nonincreasing and \(\psi G\) is nondecreasing: if
\(0<t_1<t_2\), then for \(t_1\leq r<t_2\),
\[
\psi(t_1)U(r) \leq F(r) \leq F(t_2) \leq \psi(t_2)G(t_2),
\]
while for \(r\geq t_2\),
\[
\psi(t_1)U(r) \leq \psi(t_2)G(t_2).
\]
Thus \(S_\psi G=G\). Since \(U\leq G\) and \(G\) is nonincreasing,
\(U^*\leq G\), and therefore
\[
S_\psi U\leq G.
\]
This proves \eqref{eq:range-stability-general}.

Applying it with \(E=X'\), and using
\[
(X_+)'\simeq(X')_-, \qquad \|v\|_{(X')_-} \simeq \|S_\psi v\|_{X'},
\]
we obtain
\begin{equation*}
 \|R_\varphi h^*\|_{(X_+)'} \lesssim
\|R_\varphi h^*\|_{X'}.
\end{equation*}
Therefore, for \(u\in X_+\),
\[
\begin{aligned}
\int_0^1Q_\varphi|u|(t)h^*(t)\,dt &= \int_0^1|u(t)|R_\varphi
h^*(t)\,dt
\\
&\lesssim \|u\|_{X_+}\|R_\varphi h^*\|_{X'},
\end{aligned}
\]
and hence
\begin{equation*}
 Q_\varphi:X_+\longrightarrow Y_X'.
\end{equation*}
The recovery identity now gives
\[
\mathcal O_\varphi(X_+) \hookrightarrow Y_X'.
\]
Together with \eqref{eq:Yprime-into-O-Xplus}, this proves
\[
Y_X' \simeq \mathcal O_\varphi(X_+),
\]
and completes the proof.
\end{proof}

The theorem gives two equivalent descriptions of the same optimal
object. On the associate side,
\[
Y_X = \{h:R_\varphi h^*\in X'\},
\]
whereas on the target side
\[
Y_X' \simeq \mathcal O_\varphi(X_+).
\]
Thus \(X_+\) is not itself the target: it is the canonical
interpolation envelope which generates the optimal Banach target.

\begin{remark}
The preceding theorem goes beyond the classical Hardy--Copson
absorption mechanism; see, for example, \cite{MartinMilman2010}. If
\[
C_\varphi g(t) = \frac1{\varphi(t)} \int_t^1\frac{\varphi(s)}s
g(s)\,ds
\]
is bounded on \(X\), then the recovery identity gives
\[
\mathcal N_{\varphi,X}(f) \simeq \left\| \frac{f^{**}}{\varphi}
\right\|_X.
\]
When \(C_\varphi\) is not bounded, this simpler description need not
hold. Nevertheless, the upper extremal always satisfies
\[
X_+\in\Int(L^1,\Lambda_\varphi),
\]
and therefore
\[
\mathcal O_\varphi(X_+)\simeq Y_X'
\]
is r.i.-normable. Thus failure of the Hardy--Copson absorption does
not prevent the existence of the optimal Banach exterior; the
interpolation envelope \(X_+\) provides it canonically.
\end{remark}
\begin{remark}
There is a natural counterpart from below. Since
\(X_-\in\Int(L^1,\Lambda_\varphi)\), \(\mathcal O_\varphi(X_-)\) is
r.i.-normable, and \(X_-\hookrightarrow X\) gives
\[
\mathcal O_\varphi(X_-) \hookrightarrow \mathcal O_\varphi(X).
\]
Moreover, if \(A\in\Int(L^1,\Lambda_\varphi)\) and
\(A\hookrightarrow X\), then \(A\hookrightarrow X_-\), and the
order-reflection principle yields
\[
\mathcal O_\varphi(A) \hookrightarrow \mathcal O_\varphi(X_-).
\]
Thus \(\mathcal O_\varphi(X_-)\) is maximal among the normable
oscillation classes generated by interpolation spaces contained in
\(X\), while \(Y_X'\) is the least r.i. space containing \(\mathcal
O_\varphi(X)\).
\end{remark}

\section{Normability and interpolation collapse}
\label{sec:collapse}

We now determine when the canonical normable interior and the
optimal Banach exterior coincide:
\[
\mathcal O_\varphi(X_-) \hookrightarrow \mathcal O_\varphi(X)
\hookrightarrow Y_X'.
\]

\begin{theorem}[Normability and interpolation collapse]
\label{thm:main-collapse-general} Let \(\varphi\) be admissible and
let \(X\) be an intermediate r.i. space for \(\vec L_\varphi\). Then
the following are equivalent:
\begin{enumerate}[label=\textup{(\roman*)}]
\item
\(\mathcal N_{\varphi,X}\) is equivalent on \(\mathcal
O_\varphi(X)\) to the norm of an r.i. space;

\item
\[
X\in\Int(L^1,\Lambda_\varphi),
\]
or equivalently,
\[
X'\in\Int(L^\infty,M_\psi);
\]

\item
\[
X_-\simeq X_+,
\]
or equivalently,
\[
(X')_-\simeq (X')_+;
\]

\item
\[
S_\psi:X'\longrightarrow X'
\]
is bounded;

\item
\[
\mathcal O_\varphi(X_-)\simeq Y_X'.
\]
\end{enumerate}
\end{theorem}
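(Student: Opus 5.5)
The equivalences (ii)$\Leftrightarrow$(iii)$\Leftrightarrow$(iv) are already contained in Theorem~\ref{thm:phi-extremals}, Proposition~\ref{prop:dual-extremal} and \eqref{eq:pre-collapse}. The dual form of (ii) is \eqref{eq:phi-dual-interpolation}. For the dual form of (iii), we combine \eqref{eq:phi-extremal-duality} with $E''=E$: if $X_-\simeq X_+$, then $(X')_+\simeq(X_-)'\simeq(X_+)'\simeq(X')_-$, and conversely. So the plan is to connect (i) and (v) to this block. We will prove (ii)$\Rightarrow$(i), (ii)$\Rightarrow$(v), (v)$\Rightarrow$(ii) and (i)$\Rightarrow$(ii).

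(ii)$\Rightarrow$(i) is exactly \eqref{eq:normability-interpolation-space} in the proof of Theorem~\ref{thm:optimal-exterior} with $E=X$: the space $\widetilde X$ is an r.i. space whose norm is equivalent to $\mathcal N_{\varphi,X}$. For (ii)$\Rightarrow$(v), (ii) gives $X_-\simeq X\simeq X_+$. Hence $\mathcal O_\varphi(X_-)\simeq\mathcal O_\varphi(X)\simeq\mathcal O_\varphi(X_+)\simeq Y_X'$ by Theorem~\ref{thm:optimal-exterior}.

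For (v)$\Rightarrow$(ii), we use the chain $\mathcal O_\varphi(X_-)\hookrightarrow\mathcal O_\varphi(X)\hookrightarrow\mathcal O_\varphi(X_+)\simeq Y_X'$. Under (v) all of these collapse, and in particular $\mathcal O_\varphi(X_+)\hookrightarrow\mathcal O_\varphi(X_-)$. Both $X_\pm$ are intermediate for $\vec L_\varphi$ by Theorem~\ref{thm:phi-extremals}, so the order-reflection principle \eqref{eq:order-reflection-general} gives $X_+\hookrightarrow X_-$. Hence $X_-\simeq X_+$, which is (iii).

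The remaining and main step is (i)$\Rightarrow$(ii). Assume $\mathcal O_\varphi(X)=Y$ with $\|f\|_Y\simeq\mathcal N_{\varphi,X}(f)$ for some r.i. space $Y$. Since $Y$ is an r.i. space containing $\mathcal O_\varphi(X)$, the minimality part of Theorem~\ref{thm:optimal-exterior} gives $Y_X'\hookrightarrow Y$. Thus $\mathcal O_\varphi(X_+)\simeq Y_X'\hookrightarrow Y\simeq\mathcal O_\varphi(X)$, where the embeddings hold with norms $\mathcal N$; this is where the equivalence of norms in (i) is used, not merely equality of sets. The order-reflection principle, applied with $A=X_+$, then yields $X_+\hookrightarrow X$. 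Combined with $X\hookrightarrow X_+$ this gives $X\simeq X_+$, and $X_+\in\Int(L^1,\Lambda_\varphi)$ by Theorem~\ref{thm:phi-extremals}, so (ii) holds.

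The delicate point is checking that the embedding $Y_X'\hookrightarrow Y$ of r.i. spaces transfers to the functional inequality $\mathcal N_{\varphi,X}(f)\lesssim\mathcal N_{\varphi,X_+}(f)$ required by \eqref{eq:order-reflection-general}. The chain above supplies exactly this. Alternatively, one can bypass order reflection: apply the estimates to $f=Q_\varphi g$ with $g=g^*$, and use \eqref{eq:recovery-equivalence} for both $X$ and $X_+$, to get $\|g\|_X\lesssim\|g\|_{X_+}$ directly. I would present this recovery-based argument as the cleanest route.
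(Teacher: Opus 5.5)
Your proposal is correct and follows essentially the same route as the paper. You obtain (ii)--(iv) from \eqref{eq:pre-collapse} and K\"othe duality, and (ii)$\Rightarrow$(i) from \eqref{eq:normability-interpolation-space} with $E=X$. For (i)$\Rightarrow$(ii) you combine the minimality embedding $Y_X'\hookrightarrow Y$ with $Y_X'\simeq\mathcal O_\varphi(X_+)$ and then use order reflection to get $X\simeq X_+$, and you link (v) to (iii) by order reflection applied to $X_-$ and $X_+$; all of this matches the paper. Your ``recovery-based'' alternative is not a different argument, since it is just the proof of \eqref{eq:order-reflection-general} written out.
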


\begin{proof}
The equivalence between the two formulations in \textup{(ii)}
follows from \eqref{eq:phi-dual-interpolation}, while
\eqref{eq:phi-extremal-duality} gives
\[
X_-\simeq X_+ \quad\Longleftrightarrow\quad (X')_-\simeq(X')_+.
\]
The equivalence of \textup{(ii)}, \textup{(iii)}, and \textup{(iv)}
then follows from \eqref{eq:pre-collapse}. If \textup{(ii)} holds,
the normability result proved above with \(E=X\) gives \textup{(i)}.

Conversely, assume \textup{(i)} and let \(Y\) denote \(\mathcal
O_\varphi(X)\) endowed with an equivalent r.i. norm. By
Theorem~\ref{thm:optimal-exterior},
\[
\mathcal O_\varphi(X)\hookrightarrow Y_X', \qquad
Y_X'\hookrightarrow Y,
\]
and hence
\[
Y\simeq Y_X' \simeq \mathcal O_\varphi(X_+).
\]
Since both \(X\) and \(X_+\) are intermediate r.i. spaces for \(\vec
L_\varphi\), the order-reflection part of
Theorem~\ref{thm:recovery-general} applies and gives \(X\simeq
X_+\). Hence \textup{(ii)} follows.

Finally,
\[
Y_X'\simeq\mathcal O_\varphi(X_+),
\]
and another application of order reflection gives
\[
\mathcal O_\varphi(X_-)\simeq Y_X' \quad\Longleftrightarrow\quad
X_-\simeq X_+.
\]
Thus \textup{(v)} is equivalent to \textup{(iii)}.
\end{proof}

Thus normability is precisely the collapse of the two
Aronszajn--Gagliardo extremals, whereas outside the collapse regime
\(X_+\) still generates the optimal Banach exterior.

%%%%%%%%%%%%%%%%%%%%%%%%%%%%%%%%%%%%%%%%%%%%%%%%%%%%%%%%%%%%%%%%%%%%%%%%%%%%%%%
\subsection{Fundamental functions and an obstruction to collapse}
%%%%%%%%%%%%%%%%%%%%%%%%%%%%%%%%%%%%%%%%%%%%%%%%%%%%%%%%%%%%%%%%%%%%%%%%%%%%%%%

\begin{corollary}
\label{cor:fundamental-obstruction} Under the assumptions of
Theorem~\ref{thm:main-collapse-general},
\begin{equation}
\label{eq:fundamental-extremal-sandwich} t \lesssim \phi_{X_+}(t)
\lesssim \phi_X(t) \lesssim \phi_{X_-}(t) \lesssim \varphi(t),
\qquad 0<t<1,
\end{equation}
and
\begin{equation}
\label{eq:fundamental-Xplus-general} \phi_{X_+}(t) \simeq
\frac{t}{\|S_\psi\chi_{(0,t)}\|_{X'}}.
\end{equation}
Since
\[
S_\psi\chi_{(0,t)} = \chi_{(0,t)} + \frac{\psi(t)}{\psi(\cdot)}
\chi_{(t,1)},
\]
this gives an explicit formula for \(\phi_{X_+}\).

Consequently, if
\begin{equation}
\label{eq:restricted-S-obstruction} \sup_{0<t<1}
\frac{\|S_\psi\chi_{(0,t)}\|_{X'}} {\phi_{X'}(t)} = \infty,
\end{equation}
then \(X_-\not\simeq X_+\), and hence \(\mathcal N_{\varphi,X}\) is
not r.i.-normable.

If, in addition,
\[
\phi_X(t)\simeq\varphi(t),
\]
then
\begin{equation*}
\phi_{X_-}(t)\simeq\varphi(t),
\end{equation*}
and
\begin{equation}
\label{eq:critical-gap-fundamental}
\frac{\phi_{X_-}(t)}{\phi_{X_+}(t)} \simeq
\frac{\|S_\psi\chi_{(0,t)}\|_{X'}} {\phi_{X'}(t)}.
\end{equation}
\end{corollary}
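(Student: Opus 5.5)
The sandwich \eqref{eq:fundamental-extremal-sandwich} follows directly from Theorem~\ref{thm:phi-extremals}. The chain of embeddings
\[
\Lambda_\varphi\hookrightarrow X_-\hookrightarrow X\hookrightarrow X_+\hookrightarrow L^1
\]
holds with constants. Evaluating on \(\chi_{(0,t)}\) gives
\[
\phi_{L^1}(t)=t\lesssim\phi_{X_+}(t)\lesssim\phi_X(t)\lesssim\phi_{X_-}(t)\lesssim\phi_{\Lambda_\varphi}(t)=\varphi(t).
\]

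For \eqref{eq:fundamental-Xplus-general}, we pass to the associate side.

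\begin{itemize}
\item Use \(\phi_{X_+}(t)\phi_{(X_+)'}(t)=t\) together with \((X_+)'\simeq(X')_-\) from \eqref{eq:phi-extremal-duality}.
\item By \eqref{eq:dual-core-general}, \(\phi_{(X')_-}(t)\simeq\|S_\psi\chi_{(0,t)}\|_{X'}\).
\item Combining, \(\phi_{X_+}(t)\simeq t/\|S_\psi\chi_{(0,t)}\|_{X'}\).
\end{itemize}

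The explicit form of \(S_\psi\chi_{(0,t)}\) is a direct computation. For \(s\le t\), \(\sup_{r\le s}\psi(r)=\psi(s)\), so \(S_\psi\chi_{(0,t)}(s)=1\). For \(s>t\), the supremum is \(\psi(t)\), so \(S_\psi\chi_{(0,t)}(s)=\psi(t)/\psi(s)\).

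For the obstruction, suppose \(X_-\simeq X_+\). Then \(X\simeq X_+\), so \(\phi_X\simeq\phi_{X_+}\). Using \(\phi_X\phi_{X'}=t\), this gives \(\|S_\psi\chi_{(0,t)}\|_{X'}\simeq\phi_{X'}(t)\) uniformly in \(t\), which contradicts \eqref{eq:restricted-S-obstruction}. Non-normability then follows from Theorem~\ref{thm:main-collapse-general}. Alternatively, the same condition contradicts (iv) of that theorem directly, since \(\|\chi_{(0,t)}\|_{X'}=\phi_{X'}(t)\).

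In the critical case \(\phi_X\simeq\varphi\), the sandwich squeezes \(\phi_{X_-}\) between \(\phi_X\simeq\varphi\) and \(\varphi\), so \(\phi_{X_-}\simeq\varphi\). Then
\[
\frac{\phi_{X_-}(t)}{\phi_{X_+}(t)}\simeq\frac{\phi_X(t)\,\|S_\psi\chi_{(0,t)}\|_{X'}}{t}=\frac{\|S_\psi\chi_{(0,t)}\|_{X'}}{\phi_{X'}(t)},
\]
which is \eqref{eq:critical-gap-fundamental}.

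The only delicate point is the constants in \(\phi_{(X')_-}\simeq\|S_\psi\chi_{(0,t)}\|_{X'}\). These depend only on \(\varphi\) and \(X\) by \eqref{eq:dual-core-general}, and the duality equivalence is uniform in \(t\), so no obstacle remains.
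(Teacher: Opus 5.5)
Your proposal is correct and follows essentially the same route as the paper. The sandwich comes from testing the chain $\Lambda_\varphi\hookrightarrow X_-\hookrightarrow X\hookrightarrow X_+\hookrightarrow L^1$ on $\chi_{(0,t)}$, and both the formula for $\phi_{X_+}$ and the critical gap identity follow from $(X_+)'\simeq(X')_-$, $\|h\|_{(X')_-}\simeq\|S_\psi h\|_{X'}$ and $\phi_E(t)\phi_{E'}(t)=t$; your alternative route to the obstruction, testing condition (iv) of Theorem~\ref{thm:main-collapse-general} on $\chi_{(0,t)}$, is also valid and slightly more direct.
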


\begin{proof}
The extremal sandwich gives
\eqref{eq:fundamental-extremal-sandwich}. Moreover,
\[
(X_+)'\simeq(X')_-, \qquad \|\chi_{(0,t)}\|_{(X')_-} \simeq
\|S_\psi\chi_{(0,t)}\|_{X'}.
\]
Since
\[
\phi_E(t)\phi_{E'}(t)=t,
\]
we obtain \eqref{eq:fundamental-Xplus-general} and
\[
\frac{\phi_X(t)}{\phi_{X_+}(t)} \simeq
\frac{\|S_\psi\chi_{(0,t)}\|_{X'}} {\phi_{X'}(t)}.
\]
Thus \eqref{eq:restricted-S-obstruction} rules out collapse. In the
critical case \(\phi_X\simeq\varphi\), the sandwich also gives
\(\phi_{X_-}\simeq\varphi\), and the last identity yields
\eqref{eq:critical-gap-fundamental}.
\end{proof}

%%%%%%%%%%%%%%%%%%%%%%%%%%%%%%%%%%%%%%%%%%%%%%%%%%%%%%%%%%%%%%%%%%%%%%%%%%%%%%%
\subsection{Critical rigidity}
%%%%%%%%%%%%%%%%%%%%%%%%%%%%%%%%%%%%%%%%%%%%%%%%%%%%%%%%%%%%%%%%%%%%%%%%%%%%%%%

At the critical fundamental scale, the collapse criterion becomes
rigid.

\begin{theorem}[Critical rigidity]
\label{thm:critical-rigidity} Let \(\varphi\) be admissible, let
\(X\) be an intermediate r.i. space for \(\vec L_\varphi\), and
assume
\[
\phi_X(t)\simeq\varphi(t), \qquad 0<t<1.
\]
Then
\begin{equation*}
\begin{aligned}
\mathcal N_{\varphi,X}\text{ is r.i.-normable} &\Longleftrightarrow
X\in\Int(L^1,\Lambda_\varphi) \Longleftrightarrow X_-\simeq X_+
\\
&\Longleftrightarrow S_\psi:X'\to X' \Longleftrightarrow
X\simeq\Lambda_\varphi.
\end{aligned}
\end{equation*}
Moreover,
\begin{equation*}
X_-\simeq\Lambda_\varphi.
\end{equation*}
\end{theorem}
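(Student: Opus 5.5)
The first four equivalences are exactly Theorem~\ref{thm:main-collapse-general}, (i)$\Leftrightarrow$(ii)$\Leftrightarrow$(iii)$\Leftrightarrow$(iv), so no new work is needed there. What remains is to identify $X_-$ and to show that, at the critical scale, the collapse $X\in\Int(L^1,\Lambda_\varphi)$ is equivalent to $X\simeq\Lambda_\varphi$.

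We first prove $X_-\simeq\Lambda_\varphi$. The embedding $\Lambda_\varphi\hookrightarrow X_-$ is part of Theorem~\ref{thm:phi-extremals}. For the reverse embedding, note that $X_-$ is an r.i. interpolation space of $\vec L_\varphi$, and by Corollary~\ref{cor:fundamental-obstruction} its fundamental function satisfies $\phi_{X_-}\simeq\varphi$. We use the fact that $\Lambda_\varphi$ is the smallest r.i. space with fundamental function $\varphi$. Concretely, for $f$ with $f^*$ a step function (or any $f$, via Fatou), write
\[
f^*=\int_0^1\chi_{(0,s)}\,d(-f^*)(s).
\]
The triangle inequality in $X_-$ then gives
\[
\|f\|_{X_-}\le\int_0^1\phi_{X_-}(s)\,d(-f^*)(s)\lesssim\int_0^1\varphi(s)\,d(-f^*)(s)=\int_0^1 f^*\,d\varphi=\|f\|_{\Lambda_\varphi},
\]
after integrating by parts with $\varphi(0)=0$. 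This is the standard minimality of the Lorentz space, see \cite{BennettSharpley}. Hence $X_-\hookrightarrow\Lambda_\varphi$, and so $X_-\simeq\Lambda_\varphi$. The same argument applied to $X$ shows that $X\hookrightarrow\Lambda_\varphi$ is equivalent to $X\simeq\Lambda_\varphi$; we will not need this separately.

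Now the last equivalence. If $X\simeq\Lambda_\varphi$, then $X$ is trivially an interpolation space of $(L^1,\Lambda_\varphi)$, since it is one of the endpoints. Conversely, if $X\in\Int(L^1,\Lambda_\varphi)$, then by Theorem~\ref{thm:phi-extremals} we have $X\simeq X_-$, and therefore $X\simeq\Lambda_\varphi$ by the previous paragraph. This closes the chain of equivalences.

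The main point requiring care is $\phi_{X_-}\simeq\varphi$. It comes from the sandwich \eqref{eq:fundamental-extremal-sandwich}: $\varphi\simeq\phi_X\lesssim\phi_{X_-}\lesssim\varphi$, the last inequality holding because $\Lambda_\varphi\hookrightarrow X_-$. One should check that the lower bound $\phi_X\lesssim\phi_{X_-}$ really follows from $X_-\hookrightarrow X$, which it does directly. Beyond that, the only nontrivial input is the minimality of $\Lambda_\varphi$ among r.i. spaces with fundamental function $\varphi$, which uses the concavity of $\varphi$ (so that $\Lambda_\varphi$ is normed with $\phi_{\Lambda_\varphi}=\varphi$); we cite it from the classical theory.
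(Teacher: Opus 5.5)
\textbf{There is a genuine gap: your inequality proves the embedding in the wrong direction.}

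The identification $X_-\simeq\Lambda_\varphi$ fails as written, and the nontrivial direction of the last equivalence depends on it. The estimate you derive,
\[
\|f\|_{X_-}\le\int_0^1\phi_{X_-}(s)\,d(-f^*)(s)\lesssim\int_0^1 f^*\,d\varphi=\|f\|_{\Lambda_\varphi},
\]
says $\Lambda_\varphi\hookrightarrow X_-$. You already had this embedding from Theorem~\ref{thm:phi-extremals}. It does not give $X_-\hookrightarrow\Lambda_\varphi$.

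Minimality of the Lorentz space means that $\Lambda_\varphi$ lies inside every r.i. space whose fundamental function is $\simeq\varphi$. The reverse inclusion is false in general. For $\varphi(t)=t^{1/p}$, the spaces $L^p$ and $L^{p,1}$ have the same fundamental function, yet $L^p\not\hookrightarrow L^{p,1}$.

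Your argument uses only $\phi_{X_-}\simeq\varphi$ and the triangle inequality. It never uses that $X_-$ is an interpolation space. If it worked, it would give $X\simeq\Lambda_\varphi$ for every $X$ at the critical scale. That contradicts the non-normability of $L^p$ and of $L^{p,q}$, $q>1$, established later in the paper.

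\textbf{The missing ingredient is the interpolation property, used on the associate side through $S_\psi$.}

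Let $E\in\Int(L^1,\Lambda_\varphi)$ with $\phi_E\simeq\varphi$.
\begin{enumerate}
\item Then $\phi_{E'}\simeq\psi$, and $S_\psi:E'\to E'$ is bounded by Proposition~\ref{prop:dual-extremal}.
\item Take $g_a=\psi(a)^{-1}\chi_{(0,a)}$. These satisfy $\|g_a\|_{E'}\simeq1$ and $S_\psi g_a=\min\{\psi(a)^{-1},\psi(\cdot)^{-1}\}$, which increases to $1/\psi$ as $a\downarrow0$.
\item Boundedness of $S_\psi$ and the Fatou property give $1/\psi\in E'$.
\item Since $h^*\le\|h\|_{M_\psi}/\psi$, you get $M_\psi\hookrightarrow E'$.
\item K\"othe duality then gives $E=E''\hookrightarrow(M_\psi)'=\Lambda_\varphi$.
\end{enumerate}

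This is the paper's argument. It proves $S_\psi:X'\to X'\Rightarrow X\simeq\Lambda_\varphi$ this way. It then applies the same implication to $E=X_-$, using $\phi_{X_-}\simeq\varphi$ from Corollary~\ref{cor:fundamental-obstruction} together with $X_-\in\Int(L^1,\Lambda_\varphi)$.

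If you substitute this lemma for your minimality step, the rest of your structure goes through unchanged. That structure is: first $X_-\simeq\Lambda_\varphi$, then $X\simeq X_-$ under (ii), with the trivial converse since $\Lambda_\varphi$ is an endpoint.
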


\begin{proof}
The first four conditions are equivalent by
Theorem~\ref{thm:main-collapse-general}. Assume \(S_\psi:X'\to X'\)
is bounded. Since
\[
\phi_X(t)\phi_{X'}(t)=t, \qquad \psi(t)=\frac{t}{\varphi(t)},
\]
the critical assumption gives
\begin{equation*}
\phi_{X'}(t)\simeq\psi(t).
\end{equation*}

For \(0<a<1\), let
\[
g_a = \frac1{\psi(a)}\chi_{(0,a)}.
\]
Then \(\|g_a\|_{X'}\simeq1\), while
\[
S_\psi g_a(t) =
\begin{cases}
\psi(a)^{-1},&0<t\leq a,\\
\psi(t)^{-1},&a<t<1.
\end{cases}
\]
The boundedness of \(S_\psi\), followed by Fatou as
\(a\downarrow0\), therefore gives
\begin{equation*}
\frac1\psi\in X'.
\end{equation*}
Hence, for \(h\in M_\psi\),
\[
h^*(t) \leq \frac{\|h\|_{M_\psi}}{\psi(t)},
\]
and so \(M_\psi\hookrightarrow X'\). Conversely, the standard
embedding \(E\hookrightarrow M_{\phi_E}\) for r.i. spaces gives
\[
X' \hookrightarrow M_{\phi_{X'}} \simeq M_\psi;
\]
see \cite{BennettSharpley}. Thus \(X'\simeq M_\psi\), and K\"othe
duality yields \(X\simeq\Lambda_\varphi\).

Finally, Corollary~\ref{cor:fundamental-obstruction} gives
\(\phi_{X_-}\simeq\varphi\), while
\(X_-\in\Int(L^1,\Lambda_\varphi)\). Applying the preceding
implication to \(X_-\) gives
\[
X_-\simeq\Lambda_\varphi.
\]
\end{proof}

%%%%%%%%%%%%%%%%%%%%%%%%%%%%%%%%%%%%%%%%%%%%%%%%%%%%%%%%%%%%%%%%%%%%%%%%%%%%%%%
\section{Applications}
%%%%%%%%%%%%%%%%%%%%%%%%%%%%%%%%%%%%%%%%%%%%%%%%%%%%%%%%%%%%%%%%%%%%%%%%%%%%%%%

\subsection{The Lorentz scale and the Hansson--Brezis--Wainger target}
\label{subsec:Lorentz-applications}

Let
\[
1<p<\infty,\qquad 1\leq q\leq\infty,
\]
and set
\[
X=L^{p,q},\qquad \varphi(t)=t^{1/p},\qquad \psi(t)=t^{1/p'}.
\]
For this power profile we write \(\mathcal O_{1/p}(X)=\mathcal
O_\varphi(X)\) and \(\mathcal N_{1/p,X}=\mathcal N_{\varphi,X}\).
Then
\[
\Lambda_\varphi=L^{p,1}, \qquad M_\psi=L^{p',\infty},
\]
and
\[
S_ph(t) = t^{-1/p'}\sup_{0<s\leq t}s^{1/p'}h^*(s).
\]

Write
\[
\ell(t)=1+\log\frac1t.
\]
For \(1<q\leq\infty\), define
\[
\|f\|_{A_{p,q}} = \left\| \frac1{\ell(t)}
\int_t^1s^{1/p}f^{**}(s)\,\frac{ds}{s} \right\|_{L^q(dt/t)}
\]
and
\[
\|f\|_{\mathcal H_q} = \left\| \frac{f^{**}(t)}{\ell(t)}
\right\|_{L^q(dt/t)}.
\]
We also write
\[
\|f\|_{L^{p,q}(\log L)^{-1}} = \left\| t^{1/p}\ell(t)^{-1}f^*(t)
\right\|_{L^q(dt/t)},
\]
with the usual modifications when \(q=\infty\).

\begin{theorem}
 For \(1<p<\infty\) and \(1\leq
q\leq\infty\),
\begin{equation*}
 (L^{p,q})_-\simeq L^{p,1}.
\end{equation*}
Moreover,
\[
(L^{p,q})_+ \simeq
\begin{cases}
L^{p,1},&q=1,\\
A_{p,q},&1<q\leq\infty.
\end{cases}
\]
For \(1<q\leq\infty\),
\begin{equation*}
\phi_{(L^{p,q})_+}(t) \simeq t^{1/p}\ell(t)^{-1/q'},
\end{equation*}
and
\begin{equation}
\label{eq:Lpq-A-sandwich} L^{p,q} \hookrightarrow A_{p,q}
\hookrightarrow L^{p,q}(\log L)^{-1}.
\end{equation}
Consequently,
\begin{equation*}
(L^{p,q})_-\simeq(L^{p,q})_+ \quad\Longleftrightarrow\quad q=1,
\end{equation*}
so
\[
\left\| t^{-1/p}(f^{**}-f^*) \right\|_{L^{p,q}} +\|f\|_1
\]
is r.i.-normable if and only if \(q=1\).

Finally, the optimal Banach exterior satisfies
\[
Y_{L^{p,1}}'\simeq L^\infty, \qquad Y_{L^{p,q}}'\simeq\mathcal H_q,
\quad 1<q\leq\infty,
\]
and hence, for \(q>1\),
\begin{equation*}
L^{p,q} \longmapsto A_{p,q} \longmapsto \mathcal O_{1/p}(A_{p,q})
\simeq \mathcal H_q.
\end{equation*}
\end{theorem}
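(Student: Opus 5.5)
The plan is to reduce everything to the critical rigidity theorem and the explicit dual descriptions from Sections 3--4.

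\emph{Lower extremal.} Since \(\phi_{L^{p,q}}(t)\simeq t^{1/p}=\varphi(t)\), Theorem~\ref{thm:critical-rigidity} applies directly. It gives \((L^{p,q})_-\simeq\Lambda_\varphi=L^{p,1}\) for every \(q\). It also shows that collapse holds if and only if \(L^{p,q}\simeq L^{p,1}\), that is, if and only if \(q=1\). Normability for \(q=1\) then follows from Theorem~\ref{thm:main-collapse-general}, and non-normability for \(q>1\) from the same theorem. For \(q=1\) we get \((L^{p,1})_+\simeq L^{p,1}\) trivially.

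\emph{Upper extremal for \(q>1\).} I would use the dual realization \(X_+\simeq((X')_-)'\) with \(X'=L^{p',q'}\) and \(\|h\|_{(X')_-}\simeq\|S_ph\|_{L^{p',q'}}\). Writing \(v(s)=s^{1/p'}h^*(s)\), we have \(\|S_ph\|_{L^{p',q'}}=\|\sup_{s\le t}v(s)\|_{L^{q'}(dt/t)}\), which is a level-function or "least nondecreasing majorant" norm. Its Köthe dual should be computed by the known duality for cones of monotone functions: the dual of \(L^{q'}(dt/t)\) restricted to nondecreasing majorants is given by the averaging \(\frac{1}{\ell(t)}\int_t^1(\cdot)\,\frac{ds}{s}\) against \(s^{1/p}f^{**}(s)\). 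In practice I would verify the formula for \(A_{p,q}\) by checking two things. First, the pairing \(\int f^*h^*\le\|f\|_{A_{p,q}}\|S_ph\|_{L^{p',q'}}\), obtained by integrating by parts against the nondecreasing function \(\psi S_ph\) as in the proof of Theorem~\ref{thm:optimal-exterior}, followed by Hölder in \(L^{q}(dt/t)\times L^{q'}(dt/t)\). Second, a matching extremal test function built from level sets. This identification is the main obstacle. The \(\ell(t)^{-1}\) weight comes from the fact that \(\int_t^1 ds/s=\ell(t)-1\) is the measure of the set where the monotone majorant is flat, and getting two-sided constants requires a careful discretization on the dyadic scale \(t=2^{-k}\). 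If the direct duality is messy, a fallback is to compute \(K\)-functionals for \((L^1,L^{p,1})\) and use the Calderón property, with \(A_{p,q}\) as the least \(K\)-monotone hull of \(L^{p,q}\).

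\emph{Fundamental function and sandwich.} The formula for \(\phi_{(L^{p,q})_+}\) comes from \eqref{eq:fundamental-Xplus-general}. Here \(S_p\chi_{(0,t)}=\chi_{(0,t)}+(t/s)^{1/p'}\chi_{(t,1)}\), whose \(L^{p',q'}\)-norm is \(\simeq t^{1/p'}\ell(t)^{1/q'}\). This gives \(t^{1/p}\ell(t)^{-1/q'}\).

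For \eqref{eq:Lpq-A-sandwich}, the first embedding is \(X\hookrightarrow X_+\). Alternatively, it follows by Hardy's inequality in \(L^q(dt/t)\) applied to \(\frac1{\ell}\int_t^1\). The second embedding uses \(f^{**}(s)\ge f^*(t)\) for \(s\le t\). More directly, \(\int_t^1 s^{1/p}f^{**}(s)\,ds/s\ge\) a lower bound on \((t,1)\), but it is better to use \(\int_{t}^{1}\ge\int_t^{\min(2t,1)}\gtrsim t^{1/p}f^{**}(2t)\) together with dilation boundedness.

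\emph{Exterior.} Theorem~\ref{thm:optimal-exterior} gives \(Y_X'\simeq\mathcal O_{1/p}(X_+)\), with \(\|h\|_{Y_X}=\|t^{1/p}h^{**}\|_{L^{p',q'}}=\|th^{**}(t)\|_{L^{q'}(dt/t)}=\|h^{**}\cdot t\|\).
\begin{itemize}
\item For \(q=1\) this is \(\sup_t th^{**}(t)=\|h\|_1\), so \(Y'\simeq L^\infty\).
\item For \(q>1\) the norm is \(\|\int_0^th^*\|_{L^{q'}(dt/t)}\). Its dual is computed by the same monotone-cone duality as before (a Hardy-type dual). This yields \(\|f^{**}/\ell\|_{L^q(dt/t)}=\mathcal H_q\), which I would verify by testing on \(h=\chi_{(0,a)}\) and using Sawyer-type duality for nonincreasing functions.
\end{itemize}
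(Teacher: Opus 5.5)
Everything in your plan that rests on the abstract theory is sound, and some of it is more economical than the paper. Three steps work without knowing what $(L^{p,q})_+$ is: reading off ``collapse $\Leftrightarrow L^{p,q}\simeq L^{p,1}\Leftrightarrow q=1$'' directly from Theorem~\ref{thm:critical-rigidity}; computing $\phi_{(L^{p,q})_+}$ from \eqref{eq:fundamental-Xplus-general}; and proving $L^{p,q}\hookrightarrow A_{p,q}\hookrightarrow L^{p,q}(\log L)^{-1}$ by hand (Hardy's averaging operator on $L^q(0,\infty)$ after $t=e^{-x}$, and $\int_t^{2t}s^{1/p}f^{**}(s)\,ds/s\gtrsim t^{1/p}f^*(2t)$).

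The gap is the identification $(L^{p,q})_+\simeq A_{p,q}$ for $q>1$ itself. You flag it as the main obstacle and do not supply it, and the mechanism you propose for the upper pairing estimate fails for $1<q<\infty$. Integrating by parts against $G=\psi S_ph$ leaves one of two expressions. The first is $\int_0^1t^{1/p}f^{**}(t)G(t)\,dt/t$, where H\"older gives only $\|f\|_{L^{p,q}}\|G\|_{L^{q'}(dt/t)}$, i.e.\ the trivial embedding $L^{p,q}\hookrightarrow X_+$. The second is $\int F\,dG$ with $F(t)=\int_t^1s^{1/p}f^*(s)\,ds/s$. Bringing in $\|f\|_{A_{p,q}}$ there would require pairing $F/\ell$ against the measure $\ell\,dG$. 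That pairing is controlled when $q=\infty$, because $\int\ell\,dG\simeq\int G\,dt/t$. For $q<\infty$, however, $\ell\,dG$ need not even be absolutely continuous with respect to $dt/t$. Nor can you first compare pointwise and then apply H\"older. The inequality you would need, $\int_0^1t^{1/p}f^{**}G\,dt/t\lesssim\int_0^1\ell^{-1}\Psi G\,dt/t$ with $\Psi(t)=\int_t^1s^{1/p}f^{**}(s)\,ds/s$, is false. Take $f=\chi_{(0,2a)}$ and $h=a^{-1/p'}\chi_{(0,a)}$: the left side is $\simeq a^{1/p}$, while the right side is $\simeq a^{1/p}/\ell(a)$. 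The reverse estimate (``a matching extremal test function built from level sets'') is not specified at all.

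What is missing is a genuine duality theorem on cones of monotone functions. You can cite one, as the paper does: \cite[Theorem~4.4]{PustylnikSignes2008} with $\phi_1(t)=t^{1/p}$, $b=\ell^{-1}$, $E=L^q$ gives exactly $\|h\|_{(A_{p,q})'}\simeq\|\sup_{s\le t}s^{1/p'}h^*(s)\|_{L^{q'}(dt/t)}$. Or you can prove it in three steps:
\begin{enumerate}
\item apply Sawyer's duality principle for nondecreasing functions in $L^{q'}(dt/t)$; the weight tail $\int_t^1ds/s=\ell(t)-1$ is what produces the $\ell^{-1}$-average;
\item reduce the supremum over the cone $\{G\ \text{nondecreasing},\ G/\psi\ \text{nonincreasing}\}$ to all nondecreasing $G$ by passing to the least majorant with $G/\psi$ nonincreasing;
\item pass from $f^*$ to $f^{**}$ by Hardy's inequality, with $\|f\|_1$ absorbing the terms near $t=1$.
\end{enumerate}

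The same gap affects $Y_{L^{p,q}}'\simeq\mathcal H_q$. You name the right tool (Sawyer duality for nonincreasing functions; the paper uses \cite[Theorems~3.8 and~6.11]{OpicPick}). But testing on $h=\chi_{(0,a)}$ only matches fundamental functions and cannot yield a norm equivalence. Finally, $\|t^{1/p}h^{**}\|_{L^{p',q'}}\simeq\|th^{**}(t)\|_{L^{q'}(dt/t)}$ is an equivalence, not an identity, since $t^{1/p}h^{**}$ is not monotone; it holds because $th^{**}$ is quasiconcave.
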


\begin{proof}
Since
\[
\phi_{L^{p,q}}(t)\simeq t^{1/p},
\]
Theorem~\ref{thm:critical-rigidity} gives
\[
(L^{p,q})_-\simeq L^{p,1}.
\]
For \(q=1\), \(L^{p,1}\) is an endpoint of \((L^1,L^{p,1})\), and
therefore
\[
(L^{p,1})_+\simeq L^{p,1}.
\]

Let \(1<q\leq\infty\). By the dual description of the upper
extremal,
\[
\|h\|_{((L^{p,q})_+)'} \simeq \|S_ph\|_{L^{p',q'}} \simeq \left\|
\sup_{0<s\leq t}s^{1/p'}h^*(s) \right\|_{L^{q'}(dt/t)}.
\]
To apply \cite[Theorem~4.4]{PustylnikSignes2008}, take
\[
\phi_1(t)=t^{1/p}, \qquad b(t)=\ell(t)^{-1}, \qquad E=L^q(0,\infty).
\]
Here \(\phi_1\) is a quasi-power function, \(b\) is bounded and
increasing with \(b(t^2)\simeq b(t)\), and
\[
B(u)=b(e^{1-1/u})=u, \qquad \pi_B=1.
\]
Moreover, \(E\) has the Fatou property and
\[
\rho_E=\frac1q<1 \quad (q<\infty), \qquad \rho_E=0 \quad (q=\infty).
\]
Thus all the hypotheses of Theorem~4.4 are satisfied. Since
\(\widetilde\phi_1(t)=t/\phi_1(t)=t^{1/p'}\) and \(b(t)\ell(t)=1\),
its dual formula gives precisely
\[
\|h\|_{(A_{p,q})'} \simeq \left\| \sup_{0<s\leq t}s^{1/p'}h^*(s)
\right\|_{L^{q'}(dt/t)}.
\]
Hence
\[
(L^{p,q})_+\simeq A_{p,q}.
\]
Moreover, \cite[Proposition~3.1]{PustylnikSignes2008} gives
\eqref{eq:Lpq-A-sandwich}, while a direct computation on
\(\chi_{(0,a)}\) yields
\[
\phi_{A_{p,q}}(a) \simeq a^{1/p}\ell(a)^{-1/q'}.
\]
Thus \(L^{p,q}\not\simeq(L^{p,q})_+\) for \(q>1\), whereas the
collapse holds for \(q=1\). The normability statement follows from
Theorem~\ref{thm:main-collapse-general}.

By the definition of \(Y_X\),
\[
\|h\|_{Y_{L^{p,q}}} = \|t^{1/p}h^{**}(t)\|_{L^{p',q'}}.
\]
Since \(t\mapsto t h^{**}(t)\) is quasiconcave, the standard Lorentz
representation for quasiconcave functions gives
\begin{equation*}
 \|t^{1/p}h^{**}(t)\|_{L^{p',q'}} \simeq
\|t\,h^{**}(t)\|_{L^{q'}(dt/t)}.
\end{equation*}
For \(q=1\), the right-hand side is
\[
\sup_{0<t<1}t h^{**}(t)=\|h\|_1,
\]
so
\[
Y_{L^{p,1}}\simeq L^1, \qquad Y_{L^{p,1}}'\simeq L^\infty.
\]

For \(1<q\leq\infty\), \(\mathcal H_q\) is the space
\(L(\infty,q;-1,0)\) in the notation of \cite{OpicPick}. By
\cite[Theorems~3.8 and~6.11]{OpicPick},
\[
\|h\|_{(\mathcal H_q)'} \simeq \|t\,h^{**}(t)\|_{L^{q'}(dt/t)},
\]
with the usual interpretation when \(q=\infty\). Hence
\[
Y_{L^{p,q}} \simeq (\mathcal H_q)',
\]
and K\"othe duality gives
\[
Y_{L^{p,q}}' \simeq \mathcal H_q.
\]
\end{proof}

\begin{remark}
The supremal criterion detects limiting interpolation spaces which
are not covered by the classical Copson mechanism. Fix \(\beta>0\).
Choose \(0<t_\beta<1\) so that
\[
t^{1/p}\ell(t)^{-\beta}
\]
is increasing and concave on \((0,t_\beta]\), and let
\(\theta_\beta\) be an increasing concave extension to \((0,1)\)
which agrees with this function on \((0,t_\beta]\). Thus
\[
\theta_\beta(t) \simeq t^{1/p}\ell(t)^{-\beta}, \qquad 0<t<1.
\]
Set
\[
X_\beta=\Lambda_{\theta_\beta}, \qquad
\eta_\beta(t)=\frac{t}{\theta_\beta(t)}.
\]
Then
\[
L^{p,1}\hookrightarrow X_\beta\hookrightarrow L^1, \qquad
X_\beta'\simeq M_{\eta_\beta},
\]
and
\[
\eta_\beta(t) \simeq t^{1/p'}\ell(t)^\beta.
\]

Since \(A_{\theta_\beta}<\infty\), the \(h^*\)-description of the
Marcinkiewicz norm applies. Hence
\[
\begin{aligned}
\eta_\beta(t)S_ph(t) &\lesssim \ell(t)^\beta \sup_{0<s\leq
t}s^{1/p'}h^*(s)
\\
&\leq \sup_{0<s\leq t} \ell(s)^\beta s^{1/p'}h^*(s) \lesssim
\|h\|_{M_{\eta_\beta}},
\end{aligned}
\]
and therefore
\[
S_p:X_\beta'\longrightarrow X_\beta'
\]
is bounded. Theorem~\ref{thm:main-collapse-general} gives
\[
X_\beta\in\Int(L^1,L^{p,1}),
\]
and consequently
\[
\left\| t^{-1/p}(f^{**}-f^*) \right\|_{X_\beta} +\|f\|_1
\]
is r.i.-normable.

On the other hand, the corresponding Copson operator
\[
C_pg(t) = t^{-1/p}\int_t^1s^{1/p-1}g(s)\,ds
\]
is not bounded on \(X_\beta\). Indeed, let \(0<a<t_\beta/2\) and
\(g_a=\chi_{(0,a)}\). Then
\[
\|g_a\|_{X_\beta} = \theta_\beta(a) \simeq a^{1/p}\ell(a)^{-\beta},
\]
while, for \(0<t<a/2\),
\[
C_pg_a(t) = p\,t^{-1/p} \bigl(a^{1/p}-t^{1/p}\bigr) \gtrsim
a^{1/p}t^{-1/p}.
\]
Since
\[
\theta_\beta(t) = t^{1/p}\ell(t)^{-\beta}
\]
for \(0<t<t_\beta\),
\[
d\theta_\beta(t) \simeq t^{1/p-1}\ell(t)^{-\beta}\,dt
\]
there, and hence
\[
\|C_pg_a\|_{X_\beta} \gtrsim a^{1/p} \int_0^{a/2}
\frac{dt}{t\,\ell(t)^\beta}.
\]
For \(0<\beta\leq1\) the last integral is infinite, whereas for
\(\beta>1\),
\[
\int_0^{a/2} \frac{dt}{t\,\ell(t)^\beta} \simeq \ell(a)^{1-\beta},
\]
so
\[
\frac{\|C_pg_a\|_{X_\beta}}
     {\|g_a\|_{X_\beta}}
\gtrsim \ell(a) \longrightarrow\infty \qquad (a\downarrow0).
\]
Thus
\[
C_p:X_\beta\not\longrightarrow X_\beta
\]
for every \(\beta>0\). Hence normability may persist even when the
classical \(f^{**}\)-absorption mechanism fails.
\end{remark}

%%%%%%%%%%%%%%%%%%%%%%%%%%%%%%%%%%%%%%%%%%%%%%%%%%%%%%%%%%%%%%%%%%%%%%%%%%%%%%%
\subsection{The Orlicz scale at the critical power}
%%%%%%%%%%%%%%%%%%%%%%%%%%%%%%%%%%%%%%%%%%%%%%%%%%%%%%%%%%%%%%%%%%%%%%%%%%%%%%%

The Orlicz scale behaves differently: logarithmic perturbations at
the critical power do not restore normability.

Let \(A\) be a Young function, let \(\widetilde A\) be its
complementary function, and recall that
\[
(L^A)'\simeq L^{\widetilde A}, \qquad \phi_{L^A}(t) \simeq
\frac1{A^{-1}(1/t)}.
\]

\begin{proposition}
Let \(1<p<\infty\), \(\varphi(t)=t^{1/p}\), and assume
\[
L^{p,1}\hookrightarrow L^A\hookrightarrow L^1.
\]
If
\[
\phi_{L^A}(t)\simeq t^{1/p},
\]
then \(\mathcal N_{1/p,L^A}\) is not r.i.-normable.

More generally, let
\[
A_\gamma(u) \simeq u^p\bigl(\log(e+u)\bigr)^\gamma, \qquad
u\to\infty, \qquad \gamma\leq0.
\]
Then
\[
L^{p,1}\hookrightarrow L^{A_\gamma}\hookrightarrow L^1,
\]
and \(\mathcal N_{1/p,L^{A_\gamma}}\) is not r.i.-normable.
\end{proposition}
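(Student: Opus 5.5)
The first assertion follows from critical rigidity. Since \(\phi_{L^A}\simeq t^{1/p}=\varphi\), Theorem~\ref{thm:critical-rigidity} shows that \(\mathcal N_{1/p,L^A}\) is r.i.-normable if and only if \(L^A\simeq L^{p,1}\). It therefore suffices to show that no Orlicz space coincides with \(L^{p,1}\), \(1<p<\infty\), up to equivalent norms.

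To prove this, I would suppose \(L^A\simeq L^{p,1}\). Then \(\phi_{L^A}(t)\simeq t^{1/p}\), so \(A^{-1}(v)\simeq v^{1/p}\) for large \(v\). Hence \(A(u)\simeq u^p\) for large \(u\). Since we are on \((0,1)\), only the behavior of \(A\) near infinity matters, so \(L^A\simeq L^p\). But \(L^{p,1}\subsetneq L^p\): for example, \(f(t)=t^{-1/p}\ell(t)^{-1}\) lies in \(L^p\) but not in \(L^{p,1}\), because \(\int_0^1\ell(t)^{-1}\,dt/t=\infty\). This is a contradiction.

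One technical point needs care. From \(\phi_{L^A}(t)\simeq t^{1/p}\) we get \(A^{-1}(v)\simeq v^{1/p}\), and we need to pass to \(A(u)\simeq u^p\) up to dilation constants in the argument, that is, \(A(c_1u)\le u^p\le A(c_2u)\). This is exactly what equivalence of Orlicz norms requires, so the passage is legitimate; I would state it explicitly.

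For the second assertion, I would first compute the fundamental function. Since \(A_\gamma^{-1}(v)\simeq v^{1/p}(\log v)^{-\gamma/p}\), we get
\[
\phi_{L^{A_\gamma}}(t)\simeq t^{1/p}\ell(t)^{\gamma/p}\lesssim t^{1/p}.
\]
For the embedding \(L^{p,1}\hookrightarrow L^{A_\gamma}\), note that \(A_\gamma(u)\lesssim u^p\) for large \(u\) when \(\gamma\le0\). This gives \(L^p\hookrightarrow L^{A_\gamma}\), and together with \(L^{p,1}\hookrightarrow L^p\) we obtain the claim. The embedding \(L^{A_\gamma}\hookrightarrow L^1\) is automatic on \((0,1)\).

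The case \(\gamma=0\) is the critical case already handled above. For \(\gamma<0\) the fundamental function is not critical, so I would use the obstruction \eqref{eq:restricted-S-obstruction} of Corollary~\ref{cor:fundamental-obstruction}. Here \(\psi(t)=t^{1/p'}\), \(X'=L^{\widetilde A_\gamma}\) and \(\phi_{X'}(t)\simeq t^{1/p'}\ell(t)^{-\gamma/p}\). Moreover
\[
S_\psi\chi_{(0,t)}\ge t^{1/p'}s^{-1/p'}\chi_{(t,1)}(s),
\]
so I need a lower bound for \(\|s^{-1/p'}\chi_{(t,1)}\|_{L^{\widetilde A_\gamma}}\). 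Since \(\widetilde A_\gamma(v)\simeq v^{p'}(\log v)^{-\gamma p'/p}\) for large \(v\), a direct modular computation gives
\[
\int_t^1\widetilde A_\gamma\bigl(s^{-1/p'}/\lambda\bigr)\,ds
\simeq\lambda^{-p'}\int_t^1\ell(s)^{-\gamma p'/p}\,\frac{ds}{s}
\simeq\lambda^{-p'}\ell(t)^{1-\gamma p'/p}.
\]
Hence the norm is \(\simeq\ell(t)^{1/p'-\gamma/p}\). The ratio in \eqref{eq:restricted-S-obstruction} is then at least
\[
\frac{t^{1/p'}\ell(t)^{1/p'-\gamma/p}}{t^{1/p'}\ell(t)^{-\gamma/p}}=\ell(t)^{1/p'}\to\infty,
\]
so the obstruction holds and \(\mathcal N_{1/p,L^{A_\gamma}}\) is not normable. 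The case \(\gamma=0\) is also covered by this computation.

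The main obstacle is justifying the Luxemburg-norm lower bound uniformly in \(t\). The modular estimate is valid only in the region where the argument \(s^{-1/p'}/\lambda\) is large, so I would restrict to \(s\in(t,\sqrt t)\) and take \(\lambda\) of the order of the predicted norm. On that range \(\ell(s)\simeq\ell(t)\) and the argument is large, which gives the bound up to constants.
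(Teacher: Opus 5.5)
Your proof is correct. The first assertion is argued exactly as in the paper (critical rigidity, then $L^A\simeq L^p\supsetneq L^{p,1}$). For the logarithmic family, however, you take a different route.

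The paper checks the hypotheses of Theorem~A of Kerman--Phipps--Pick for $L^{\widetilde A_\gamma}$. It then shows that the resulting necessary condition \eqref{eq:Orlicz-supremal-condition} fails by a factor of order $\log v$. You instead test $S_\psi$ only on $\chi_{(0,t)}$, via the obstruction \eqref{eq:restricted-S-obstruction} of Corollary~\ref{cor:fundamental-obstruction}, and bound the Luxemburg norm of $s^{-1/p'}\chi_{(t,1)}$ from below by hand.

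Your localization to $s\in(t,\sqrt t)$ with $\lambda=c\,\ell(t)^{1/p'-\gamma/p}$, $c$ small, works uniformly for small $t$. On that range $\log\bigl(s^{-1/p'}/\lambda\bigr)\simeq\ell(t)$ and the exponent $-\gamma p'/p$ is nonnegative, so the modular is $\gtrsim c^{-p'}>1$.

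Your route has three advantages:
\begin{itemize}
\item It is self-contained: there is no external Orlicz criterion to verify and no modification of $\widetilde A_\gamma$ near the origin.
\item It handles $\gamma=0$ without critical rigidity.
\item It is quantitative. It shows $\phi_X/\phi_{X_+}\gtrsim\ell(t)^{1/p'}$, so the failure of collapse is already visible on fundamental functions. For $\gamma=0$ this matches $\phi_{(L^{p,p})_+}(t)\simeq t^{1/p}\ell(t)^{-1/p'}$ from the Lorentz section.
\end{itemize}
The paper's route instead plugs into a general boundedness theory for supremal operators on Orlicz spaces.

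In substance the two obstructions are the same. The substitution $w=(t/s)^{1/p'}/\lambda$ with $\lambda\simeq\phi_{X'}(t)$ turns the modular of $S_\psi\chi_{(0,t)}$ into the quotient of the two sides of \eqref{eq:Orlicz-supremal-condition}, up to constants, evaluated at $v\simeq 1/\phi_{X'}(t)$.
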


\begin{proof}
If
\[
\phi_{L^A}(t)\simeq t^{1/p},
\]
then
\[
A^{-1}(s)\simeq s^{1/p}, \qquad s\to\infty,
\]
and therefore
\[
A(u)\simeq u^p, \qquad u\to\infty.
\]
Hence \(L^A\simeq L^p\) on \((0,1)\). If \(\mathcal N_{1/p,L^A}\)
were r.i.-normable, Theorem~\ref{thm:critical-rigidity} would give
\[
L^A\simeq L^{p,1},
\]
which is impossible since
\[
L^{p,1}\subsetneq L^p.
\]

Consider now the logarithmically perturbed scale. Its complementary
Young function satisfies
\[
\widetilde A_\gamma(v) \simeq v^{p'}
\bigl(\log(e+v)\bigr)^{-\gamma/(p-1)}, \qquad v\to\infty.
\]
For any fixed \(r>p'\),
\[
M(p')\cap M(r) \hookrightarrow L^{\widetilde A_\gamma}
\hookrightarrow M(p'),
\]
where \(M(q)=L^{q,\infty}(0,1)\). Moreover,
\[
t^{-1/p'}\notin L^{\widetilde A_\gamma}(0,1), \qquad \int_1^\infty
\frac{\widetilde A_\gamma(s)}{s^{r+1}}\,ds <\infty.
\]

Since \(\psi(t)=t^{1/p'}\), we write \(S_p=S_\psi\), so that
\[
S_ph(t) = t^{-1/p'} \sup_{0<s\leq t}s^{1/p'}h^*(s).
\]
After an immaterial modification of \(\widetilde A_\gamma\) near the
origin, all the hypotheses of \cite[Theorem~A]{KermanPhippsPick2014}
are satisfied with \(b=1\), \(\alpha=p'\), \(\beta=r\), and
\(A_1=A_2=\widetilde A_\gamma\). With these choices, the operator
\(S_\alpha\) in that theorem is precisely \(S_p\). Consequently,
boundedness of
\[
S_p: L^{\widetilde A_\gamma} \longrightarrow L^{\widetilde A_\gamma}
\]
would require the existence of \(K>0\) such that
\begin{equation}
\label{eq:Orlicz-supremal-condition} \int_1^v \frac{\widetilde
A_\gamma(s)}{s^{p'+1}}\,ds \lesssim \frac{\widetilde
A_\gamma(Kv)}{v^{p'}}, \qquad v>1.
\end{equation}

Put
\[
\delta=\frac{\gamma}{p-1}.
\]
For large \(v\),
\[
\int_1^v \frac{\widetilde A_\gamma(s)}{s^{p'+1}}\,ds \simeq \int_1^v
\frac{ds}{s(\log(e+s))^\delta}.
\]
Restricting the last integral to \((v^{1/2},v)\) gives
\[
\int_1^v \frac{\widetilde A_\gamma(s)}{s^{p'+1}}\,ds \gtrsim (\log
v)^{1-\delta},
\]
whereas, for each fixed \(K>0\),
\[
\frac{\widetilde A_\gamma(Kv)}{v^{p'}} \simeq (\log v)^{-\delta}.
\]
Thus the quotient of the left-hand side of
\eqref{eq:Orlicz-supremal-condition} by its right-hand side is
bounded from below by a constant multiple of \(\log v\), and
therefore tends to infinity. Hence
\[
S_p: L^{\widetilde A_\gamma} \not\longrightarrow L^{\widetilde
A_\gamma}
\]
for every \(\gamma\leq0\).

Since
\[
(L^{A_\gamma})' \simeq L^{\widetilde A_\gamma},
\]
Theorem~\ref{thm:main-collapse-general} gives the conclusion.
\end{proof}

\begin{remark}[Lorentz versus Orlicz at the critical power]
The preceding examples show that the critical power index alone does
not determine normability. Logarithmic perturbations in the Lorentz
endpoint scale may satisfy the supremal criterion even though Copson
absorption fails, whereas the corresponding Orlicz perturbations
remain on the non-normable side of the criterion. Thus \(S_\psi\)
detects fine interpolation structure which is invisible at the level
of power indices. A complementary computational approach to Orlicz
interpolation has recently been developed in
\cite{GogatishviliKermanSpektor2026}, where gauge functionals built
from Hardy averages are used to obtain more explicit descriptions of
dual norms arising from the \(K\)-method.
\end{remark}

%%%%%%%%%%%%%%%%%%%%%%%%%%%%%%%%%%%%%%%%%%%%%%%%%%%%%%%%%%%%%%%%%%%%%%%%%%%%%%%

\end{document}